\documentclass[a4paper,12pt,oneside,headsepline]{scrartcl}

\usepackage{etoolbox}
\usepackage{ifdraft}

\usepackage{luainputenc}
\usepackage[T1]{fontenc}


\usepackage{lmodern}
\usepackage{fourier}

\usepackage{amssymb, amsfonts}

\usepackage{mathrsfs} 
\usepackage{dsfont}
\usepackage[usenames,dvipsnames]{xcolor}
\usepackage{lettrine}
\usepackage{url}


\usepackage{stmaryrd}


\usepackage[english]{babel}


\usepackage[draft=false]{scrlayer-scrpage}

\usepackage{wrapfig}
\usepackage{footmisc}
\usepackage{needspace}

\usepackage{amsmath}
\usepackage{nccmath}
\usepackage[thmmarks, amsmath, amsthm]{ntheorem}


\providebool{nobiblatex}
\boolfalse{nobiblatex}
\ifbool{nobiblatex}{%
}{%
  \usepackage[style=numeric-comp,giveninits,url=false,sortcites=true,maxbibnames=99]{biblatex}
  \bibliography{bibliography.bib}
}

\usepackage[pdftex,final,
            pdfborder={0 0 0}, colorlinks=true,
            linkcolor=BrickRed, citecolor=ForestGreen, urlcolor=RoyalBlue]{hyperref}


\ifdraft{%
\usepackage[textsize=scriptsize,colorinlistoftodos]{todonotes}
\usepackage{lineno}
}{}

\usepackage{booktabs}
\usepackage[inline]{enumitem}
\usepackage{float}
\usepackage{graphicx}
\usepackage{multicol}
\usepackage{scrtime}
\usepackage{tikz}
\usetikzlibrary{matrix,arrows,positioning}

\ifdraft{\date{\today}}{\date{}}


\linespread{1.05}

\ifdraft{%
\linenumbers
\newcommand*\patchAmsMathEnvironmentForLineno[1]{%
  \expandafter\let\csname old#1\expandafter\endcsname\csname #1\endcsname
  \expandafter\let\csname oldend#1\expandafter\endcsname\csname end#1\endcsname
  \renewenvironment{#1}%
     {\linenomath\csname old#1\endcsname}%
     {\csname oldend#1\endcsname\endlinenomath}}%
\newcommand*\patchBothAmsMathEnvironmentsForLineno[1]{%
  \patchAmsMathEnvironmentForLineno{#1}%
  \patchAmsMathEnvironmentForLineno{#1*}}%
\AtBeginDocument{%
\patchBothAmsMathEnvironmentsForLineno{equation}%
\patchBothAmsMathEnvironmentsForLineno{align}%
\patchBothAmsMathEnvironmentsForLineno{flalign}%
\patchBothAmsMathEnvironmentsForLineno{alignat}%
\patchBothAmsMathEnvironmentsForLineno{gather}%
\patchBothAmsMathEnvironmentsForLineno{multline}%
}}





\KOMAoptions{DIV=10}


\pagestyle{scrheadings}
\clearscrheadings
\automark[section]{subsection}

\renewcommand{\subsectionmark}[1]{}

\cfoot[--\,\pagemark\,--]{--\,\pagemark\,--}
\lohead{{\small\normalfont \headertitle}}
\rohead{{\small \headerauthors}}


\RedeclareSectionCommand[%
  font=\Large\sffamily\bfseries,%
  beforeskip=1\baselineskip,%
  afterskip=0.5\baselineskip,%
  indent=0em%
  ]{section}

\RedeclareSectionCommands[%
  font=\normalfont\bfseries,%
  afterskip=-1em%
  ]{subsection,subsubsection}

\RedeclareSectionCommands[%
  font=\normalfont\itshape,%
  afterskip=-1em,%
  indent=0pt,
  ]{paragraph}



\ifbool{nobiblatex}{}{%
  \AtEveryBibitem{\clearfield{doi}}
  \AtEveryBibitem{\clearfield{isbn}}
  \AtEveryBibitem{\clearfield{issn}}
  \AtEveryBibitem{\clearfield{pages}}
  \AtEveryBibitem{\clearlist{language}}

  \setlength\bibitemsep{1pt}
  
  \renewbibmacro*{in:}{}
  \DeclareFieldFormat
    [article,inbook,incollection,inproceedings,patent,thesis,unpublished]
    {title}{#1}
}


\newenvironment{enumeratearabic*}{
\begin{enumerate*}[label=(\arabic*)] 
}{
\end{enumerate*}
}

\newenvironment{enumerateroman*}{
\begin{enumerate*}[label=(\roman*)] 
}{
\end{enumerate*}
}


\numberwithin{equation}{section}

\theoremnumbering{arabic}
\newtheorem{theoremcounter}{theoremcounter}[section]
\theoremnumbering{Alph}
\newtheorem{maintheoremcounter}{maintheoremcounter}

\theoremstyle{plain}

\newtheorem{corollary}[theoremcounter]{Corollary}

\newtheorem{proposition}[theoremcounter]{Proposition}
\newtheorem{theorem}[theoremcounter]{Theorem}

\theoremstyle{plain}

\newtheorem{maintheorem}[maintheoremcounter]{Theorem}

\theoremstyle{definition}

\theoremstyle{remark}

\newtheorem{remark}[theoremcounter]{Remark}

\theoremstyle{nonumberremark}

%

%


%







\newcommand{\tx}{\ensuremath{\text}}

\newcommand{\nbd}{\nobreakdash-\hspace{0pt}}


\newcommand{\tbf}{\bfseries}

\newcommand{\bbA}{\ensuremath{\mathbb{A}}}

\newcommand{\bbone}{\ensuremath{\mathds{1}}}

\newcommand{\cA}{\ensuremath{\mathcal{A}}}

\newcommand{\cD}{\ensuremath{\mathcal{D}}}


\newcommand{\frake}{\ensuremath{\mathfrak{e}}}

\newcommand{\frakg}{\ensuremath{\mathfrak{g}}}

\newcommand{\frakz}{\ensuremath{\mathfrak{z}}}


\newcommand{\frakU}{\ensuremath{\mathfrak{U}}}


\newcommand{\rmf}{\ensuremath{\mathrm{f}}}


\newcommand{\rmL}{\ensuremath{\mathrm{L}}}
\newcommand{\rmM}{\ensuremath{\mathrm{M}}}

\newcommand{\rmR}{\ensuremath{\mathrm{R}}}


\newcommand{\td}{\tilde}
\newcommand{\wtd}{\widetilde}
\newcommand{\ov}{\overline}




\let\rightarroworig\rightarrow
\renewcommand{\rightarrow}
  {\DOTSB\protect\relbar\mspace{-9.7mu}\rightarroworig}

\renewcommand{\twoheadrightarrow}
  {\DOTSB\protect\rightarroworig\mspace{-15mu}\rightarroworig}

\let\leftarroworig\leftarrow
\renewcommand{\leftarrow}
  {\DOTSB\protect\leftarroworig\mspace{-9.7mu}\relbar}

\newcommand{\longhookrightarrow}
  {\DOTSB\protect\lhook\joinrel\mspace{-0.1mu}\relbar\joinrel\mspace{-0.2mu}\relbar\mspace{-11.7mu}\rightarroworig}
\newcommand{\longtwoheadrightarrow}
  {\DOTSB\protect\relbar\joinrel\rightarroworig\mspace{-15mu}\rightarroworig}

\newcommand{\ra}{\ensuremath{\rightarrow}}

\newcommand{\thra}{\ensuremath{\twoheadrightarrow}}

\newcommand{\lhra}{\ensuremath{\longhookrightarrow}}
\newcommand{\lthra}{\ensuremath{\longtwoheadrightarrow}}

\newcommand{\mto}{\ensuremath{\mapsto}}
\newcommand{\lmto}{\ensuremath{\longmapsto}}

\renewcommand{\Re}{\ensuremath{\mathrm{Re}}}


\renewcommand{\pmod}[1]{\ensuremath{\;(\mathrm{mod}\, #1)}}

\newcommand{\sgn}{\ensuremath{\mathrm{sgn}}}



\newenvironment{psmatrix}{\left(\begin{smallmatrix}}{\end{smallmatrix}\right)}



\newcommand{\lspan}{\ensuremath{\mathop{\mathrm{span}}}}



\newcommand{\ZZ}{\ensuremath{\mathbb{Z}}}
\newcommand{\QQ}{\ensuremath{\mathbb{Q}}}
\newcommand{\RR}{\ensuremath{\mathbb{R}}}
\newcommand{\CC}{\ensuremath{\mathbb{C}}}

\newcommand{\GL}[1]{\ensuremath{\mathrm{GL}_{#1}}}

\newcommand{\PGL}[1]{\ensuremath{\mathrm{PGL}_{#1}}}
\newcommand{\SL}[1]{\ensuremath{\mathrm{SL}_{#1}}}
\newcommand{\SO}[1]{\ensuremath{\mathrm{SO}_{#1}}}


\newcommand{\sym}{\ensuremath{\mathrm{sym}}}

\newcommand{\HS}{\mathbb{H}}

\newcommand{\ahol}{\mathrm{ahol}}

\newcommand{\fraksl}[1]{\mathfrak{sl}_{#1}}

\newcommand{\Ind}{\mathrm{Ind}}
\newcommand{\Res}{\mathrm{Res}}
\newcommand{\Ext}{\mathrm{Ext}}

\newcommand{\sfd}{m}

\newcommand{\ga}{\gamma}
\newcommand{\Ga}{\Gamma}

\newcommand{\Ehol}{E^{\mathrm{hol}}}
\newcommand{\Evec}{E^{\mathrm{vec}}}
\newcommand{\tdEvec}{\td{E}^{\mathrm{vec}}}

\newcommand{\bbAf}{\bbA_\rmf}

\newcommand{\quantsep}{\ {}.{}\ }


\newcommand{\headertitle}{{%
  The Tensor Product Functor and Weight-$2$ Eisenstein Series
}}
\newcommand{\headerauthors}{%
  M.~Raum%
}
\title{%
The Bernstein--Gelfand\\Tensor Product Functor and\\
the Weight-$2$ Eisenstein Series
}
\author{%
Martin Raum%
\thanks{The author was partially supported by Vetenskapsr\aa det Grant~2019-03551.}%
}
\ifdraft{%
\date{\today\ at\ \thistime}
}

\begin{document}

\thispagestyle{scrplain}
\begingroup
\deffootnote[1em]{1.5em}{1em}{\thefootnotemark}
\maketitle
\endgroup


\begin{abstract}
\small
\noindent
{\tbf Abstract:}
The Bernstein--Gelfand tensor product functors are endofunctors of the category of Harish-Chandra modules provided by tensor products with finite dimensional modules. We provide an automorphic analogue of these tensor product functors, implemented by vector-valued automorphic representations that are trivial at all finite places. They naturally explain the role of vector-valued modular forms in recent work by Bringmann--Kudla on Harish-Chandra modules associated with harmonic weak Maa\ss{} forms. We give a detailed account of the image~$\sym^1 \otimes \varpi(E_2)$ of the automorphic representation~$\varpi(E_2)$ generated by the Eisenstein series of weight~$2$ under one of those tensor product functors. This builds upon work by Roy--Schmidt--Yi, who recently determined the structure of~$\varpi(E_2)$. They found that~$\varpi(E_2)$ does not decompose as a restricted tensor product over all places of~$\QQ$, while we discover that~$\sym^1 \otimes \varpi(E_2)$ has a direct summand that does. This summand corresponds to a holomorphic and modular, vector-valued analogue of~$E_2$. The complement in~$\sym^1 \otimes \varpi(E_2)$ arises from one of the vector-valued examples in the work of Bringmann--Kudla. Our approach allows us to determine its structure at the finite places.
\\[.3\baselineskip]
\noindent
\textsf{\textbf{%
  mock modular forms%
}}%
\noindent
\ {\tiny$\blacksquare$}\ %
\textsf{\textbf{%
  harmonic weak Maa\ss{} forms%
}}%
\noindent
\ {\tiny$\blacksquare$}\ %
\textsf{\textbf{%
  Harish-Chandra\\modules%
}}%
\noindent
\ {\tiny$\blacksquare$}\ %
\textsf{\textbf{%
  vector-valued modular forms%
}}
\\[.2\baselineskip]
\noindent
\textsf{\textbf{%
  MSC Primary:
  11F11
}}%
\ {\tiny$\blacksquare$}\ %
\textsf{\textbf{%
  MSC Secondary:
  11F12, 11F70
}}
\end{abstract}




\Needspace*{4em}
\addcontentsline{toc}{section}{Introduction}
\markright{Introduction}
\lettrine[lines=2,nindent=.2em]{\tbf V}{\,ector-valued} modular forms already appear in classical work by Kuga and Shi\-mu\-ra~\cite{kuga-shimura-1960}, who describe a relation to scalar-valued modular forms, which was recast in the light of quasi-modular forms by various authors~\cite{zemel-2015,choie-lee-2016}. They play a crucial role in the recent classification of Harish-Chandra modules associated with harmonic weak Maa\ss{} forms~\cite{bringmann-kudla-2018}. Without them the classification provided by Bring\-mann--Kudla would require several special cases related to the exceptional role of the Eisenstein series of weight~$2$ and level~$1$. With the help of vector-valued modular forms, its special behavior can be extended to other weights, yielding a full case in the classification that is on equal footing with the other ones. Vector-valued modular forms and vector-valued Maa\ss{} forms are thus firmly set up to endure in the theory of harmonic weak Maa\ss{} forms. The goal of the present paper is to highlight a connection between them and tensor product functors for Harish-Chandra modules that were studied by Bernstein and Gelfand~\cite{bernstein-gelfand-1980} in the 80ies, who build upon earlier work of Kostant~\cite{kostant-1975}. As applications, we relate the construction of Bringmann--Kudla to these functors and examine a holomorphic and modular analogue~$\Evec_2$ of the weight-$2$ Eisenstein series.

Throughout, we work with the algebraic group~$G = \SL{2}$ and the associated complexified Lie algebra~$\frakg = \fraksl{2}$. Specialized to this case, Bernstein--Gelfand investigated the functors~$M \mto V \otimes M$ assigning to a~$\frakg$-module~$M$ its tensor product with a finite dimensional~$\frakg$-module~$V$. They showed that the functors~$V \otimes \,\cdot\,$ provide equivalences between categories of Harish-Chandra modules of specific Harish-Chandra parameters. In our situation, possible indecomposable~$V$ are the symmetric power representations~$\sym^\sfd$ of the standard representation for nonnegative integers~$\sfd$.

For us, the key observation is that the equivalences of categories provided by tensor product functors requires a projection to specific blocks of the category. In other words, the tensor products themselves, without these projections, may have constituents in further blocks. We write~$\varpi(E_2)_\infty$ for the Harish-Chandra module associated with~$E_2$. Then Bringmann--Kudla's construction is an instance where the nontrivial extension class that is manifested in~$\varpi(E_2)_\infty$ is sent to another nontrivial one realized by a submodule of~$\sym^\sfd \otimes \varpi(E_2)_\infty$. This one is related to the equivalence of categories found by Bernstein--Gelfand. However, if~$\sfd \ge 1$ then $\sym^\sfd \otimes \varpi(E_2)_\infty$ also splits off an irreducible direct summand, which is a limit of discrete series if~$m = 1$. This is the representation theoretic essence of our construction of~$\Evec_2$, and does not participate in the equivalence studied by Bernstein--Gelfand.

The space~$\cA(G(\QQ) \backslash G(\bbA))$ of automorphic forms for~$G$ is a~$(\frakg, K_\infty)$-module and a smooth~$G(\bbAf)$-representation, where~$K_\infty = \SO{2}(\RR) \subset G(\RR)$ and~$\bbA_f$ are the finite adeles. Refining the definition given by Bringmann--Kudla, we provide spaces of vector-valued automorphic forms~$\cA(G(\QQ) \backslash G(\bbA), \sym^\sfd)$. We say that a smooth $(\frakg,K_\infty) \times G(\bbAf)$-repre\-sen\-ta\-tion is vector-valued automorphic if it occurs as a subquotient of $\cA(G(\QQ) \backslash G(\bbA), \sym^\sfd)$. Our first result, instrumental to make the connection with the functors by Bernstein--Gelfand, provides vector-valued automorphic representations that are trivial at the finite places. We thus have a natural candidate for the modules~$V$ in the formalism of Bernstein--Gelfand. The next theorem also confirms that it is compatible with the notion of vector-valued automorphic representations. The trivial representation of~$G(\QQ_v)$ for a place~$v$ of~$\QQ$ is denoted~$\bbone_v$. Restricted tensor products in this paper are always taken with respect to spherical vectors.

\begin{maintheorem}%
\label{mainthm:automorphic_tensor_products}
There are vector-valued automorphic representations
\begin{gather*}
  \varpi(\frake_{\sfd,0})
\;\subset\;
  \cA\big( G(\QQ) \backslash G(\bbA), \sym^\sfd \big)
\end{gather*}
that admit the following restricted tensor product decomposition:
\begin{gather*}
  \varpi(\frake_{\sfd,0})
\;\cong\;
  \sym^\sfd \,\otimes\,
  \sideset{}{'}{\bigotimes}_{v \ne \infty}\,
  \bbone_v
\tx{.}
\end{gather*}

Given an automorphic representation~$\varpi$, that is a subquotient of~$\cA(G(\QQ) \backslash G(\bbA))$, the tensor product~$\varpi(\frake_{\sfd,0}) \otimes \varpi$ is vector-valued automorphic.
\end{maintheorem}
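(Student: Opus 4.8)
\section*{Proof proposal}

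The plan is to reduce the assertion to two facts: the exactness of the finite-dimensional tensor product functor, and an untwisting isomorphism that identifies $\sym^\sfd$-valued automorphic forms with an honest tensor product. First I would pin down the module underlying $\varpi(\frake_{\sfd,0}) \otimes \varpi$. Inserting the restricted tensor product decomposition from the first part of the theorem and using that $\sideset{}{'}{\bigotimes}_{v \ne \infty} \bbone_v$ is the one-dimensional trivial $G(\bbAf)$-representation, tensoring with it leaves the $G(\bbAf)$-structure of $\varpi$ untouched while the archimedean factor contributes $\sym^\sfd$ through the diagonal action. This yields an isomorphism of $(\frakg, K_\infty) \times G(\bbAf)$-modules
\begin{gather*}
  \varpi(\frake_{\sfd,0}) \otimes \varpi
\;\cong\;
  \sym^\sfd \otimes \varpi
\tx{,}
\end{gather*}
in which $\sym^\sfd$ carries only archimedean structure. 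This identification never uses that $\varpi$ factors as a restricted tensor product, so it covers the non-factorisable representations of Roy--Schmidt--Yi as well.

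Next I would set up the untwisting isomorphism. Writing $g_\infty$ for the archimedean component of $g \in G(\bbA)$ and $\rho = \sym^\sfd$ for the corresponding representation of $G(\RR)$, the assignment $F \mto \big[\, g \mto \rho(g_\infty)^{-1} F(g) \,\big]$ turns a $\sym^\sfd$-valued automorphic form into a $\sym^\sfd$-valued function whose coordinates are left $G(\QQ)$-invariant, hence an element of $\sym^\sfd \otimes \cA(G(\QQ) \backslash G(\bbA))$. A Leibniz computation shows that the map intertwines the right regular action of $X \in \frakg$ with the operator $R_X + \rmd\rho(X)$, which is exactly the diagonal action on the tensor product; the $K_\infty$-action transforms in the same way, and the $G(\bbAf)$-action is unaffected because $\rho$ ignores the finite component. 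The inverse is multiplication by $\rho(g_\infty)$, so I obtain an isomorphism of $(\frakg, K_\infty) \times G(\bbAf)$-modules
\begin{gather*}
  \cA\big( G(\QQ) \backslash G(\bbA), \sym^\sfd \big)
\;\cong\;
  \sym^\sfd \otimes \cA\big( G(\QQ) \backslash G(\bbA) \big)
\tx{.}
\end{gather*}

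The conclusion then follows formally. Because $\sym^\sfd$ is finite dimensional, the functor $\sym^\sfd \otimes \,\cdot\,$ is exact and preserves the relation of being a subquotient; as $\varpi$ is a subquotient of $\cA(G(\QQ) \backslash G(\bbA))$ by hypothesis, $\sym^\sfd \otimes \varpi$ is a subquotient of $\sym^\sfd \otimes \cA(G(\QQ) \backslash G(\bbA))$. Transporting along the untwisting isomorphism, $\sym^\sfd \otimes \varpi$, and therefore $\varpi(\frake_{\sfd,0}) \otimes \varpi$, is a subquotient of $\cA(G(\QQ) \backslash G(\bbA), \sym^\sfd)$, which is exactly the definition of being vector-valued automorphic.

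The step demanding the most care is the untwisting isomorphism, and specifically the claim that the analytic side conditions survive the twist. Since $\rho(g_\infty)^{\pm 1}$ grows at most polynomially, moderate growth, smoothness, $K_\infty$-finiteness, and finiteness under the centre of the universal enveloping algebra of $\frakg$ are all stable under the map and its inverse; checking this against the precise definition of $\cA(G(\QQ) \backslash G(\bbA), \sym^\sfd)$ is the only point where the argument leaves pure module theory and touches analysis.
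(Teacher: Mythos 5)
Your proposal proves only the second assertion of the theorem and assumes the first. The existence of $\varpi(\frake_{\sfd,0})$ inside $\cA(G(\QQ)\backslash G(\bbA),\sym^\sfd)$ together with the decomposition $\varpi(\frake_{\sfd,0}) \cong \sym^\sfd \otimes \sideset{}{'}{\bigotimes}_{v\neq\infty}\bbone_v$ is itself half of the statement, and it is exactly what your opening step (``inserting the restricted tensor product decomposition from the first part of the theorem'') needs as input. This is not a formality: the paper devotes Theorem~\ref{thm:finite_vector_valued_automorphic_representation} to it. The proof there rests on the $\SL{2}(\RR)$-invariance~\eqref{eq:symd_LR_basis_SL2Rmodular} of~$\frake_{j,\sfd-j}$, which yields
\begin{gather*}
  \td\frake_{j,\sfd-j}\big( \ga g_\infty k_\rmf \big)
  \;=\;
  \sym^\sfd\big( \ga_\infty g_\infty \big)\,
  \frake_{j,\sfd-j}(i)
  \tx{,}
\end{gather*}
so that every element of the generated module depends only on the archimedean component of its argument (hence triviality at all finite places), and on the fact that the~$\frake_{j,\sfd-j}(i)$, $0 \le j \le \sfd$, form a basis of~$\CC[X]_\sfd$ (hence the archimedean factor is~$\sym^\sfd$). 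Without this, your identification $\varpi(\frake_{\sfd,0}) \otimes \varpi \cong \sym^\sfd \otimes \varpi$ has no foundation, so as it stands the proposal is incomplete.

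Conditional on that input, your argument for the second assertion is correct and takes a genuinely different route from the paper's. Theorem~\ref{thm:automorphic_tensor_products} embeds $\cA(G(\QQ)\backslash G(\bbA))$ into $\cA(G(\QQ)\backslash G(\bbA),\sym^\sfd)$ as $G(\bbAf)$-representations by multiplication $f \mto \td\frake_{j,\sfd-j} \cdot f$, identifies $\varpi(\frake_{\sfd,0}) \otimes \varpi$ with the product $\varpi(\frake_{\sfd,0}) \cdot \varpi$ for subrepresentations, and passes to subquotients by the same exactness argument you use. Your untwisting map $F \mto \big[\, g \mto \sym^\sfd(g_\infty)^{-1} F(g) \,\big]$ instead identifies the full space $\cA(G(\QQ)\backslash G(\bbA),\sym^\sfd)$ with $\sym^\sfd \otimes \cA(G(\QQ)\backslash G(\bbA))$ under the diagonal action; your Leibniz computation is the right verification, and this formulation makes the Bernstein--Gelfand picture most transparent. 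One attribution in your closing paragraph is off, however: preservation of $\frakz$-finiteness is not a consequence of the polynomial growth of $\sym^\sfd(g_\infty)^{\pm 1}$. The coordinates of the untwisted function are sums of products of matrix coefficients of $\sym^\sfd$ with coordinates of~$F$, and their $\frakz$-finiteness is an instance of Kostant's theorem that the tensor product of a finite-dimensional module with a $\frakz$-finite module is again $\frakz$-finite --- the very result underlying the Bernstein--Gelfand functors; polynomial growth only handles the moderate-growth condition. Note finally that the paper's concrete multiplication map is reused later (Theorem~\ref{thm:automorphic_tensor_products_E2}) to realize $\varpi(\frake_{1,0}) \cdot \varpi(E_2)$ inside the space of vector-valued automorphic forms, which your abstract isomorphism by itself does not provide.
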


The vector-valued automorphic representations~$\varpi(\frake_{\sfd,0})$ are associated with the almost holomorphic vector-valued modular forms~$\frake_{j,\sfd-j}$ that appear in the work of Bring\-mann--Kudla~\cite{bringmann-kudla-2018}, and previously in work of Zemel~\cite{zemel-2015}. It is well-known how to associate an automorphic form~$\td{f}$ to a modular form~$f$, and thus the automorphic representation~$\varpi(f)$ generated by~$\td{f}$. This procedure extends to vector-valued modular forms and we find that the representations~$\varpi(\frake_{j,\sfd-j})$ for~$0 \le j \le \sfd$ are identical. 

For cuspidal newforms~$f$, it is known that~$\varpi(f)$ decomposes as a restricted tensor product~$\bigotimes' \varpi(f)_v$ over the place~$v$ of~$\QQ$ where~$\varpi(f)_v$ is irreducible. When giving up the assumption that~$f$ is cuspidal, this no longer holds in general. For example, Roy--Schmidt--Yi~\cite{roy-schmidt-yi-2021-preprint} determined the structure of~$\varpi(E_2)$ for the modular, nonholomorphic Eisenstein series~$E_2$ of weight~$2$ and level~$1$. They found that it fits into an exact sequence
\begin{gather*}
  \bbone
\lhra
  \varpi(E_2)
\lthra
  \cD(1) \,\otimes\,
  \sideset{}{'}{\bigotimes}_{v \ne \infty}\,
  \Pi_{\frac{1}{2},v}
\tx{,}
\end{gather*}
where~$\cD(k-1)$ for positive integers~$k$ is the (limit of) holomorphic discrete series of Harish-Chandra parameter~$k-1$ and the representations~$\Pi_{1 \slash 2,v}$ are degenerate principal series for~$G(\QQ_v)$. Our second theorem exhibits the structure of the tensor product~$\varpi(\frake_{1,0}) \otimes \varpi(E_2)$.
\begin{maintheorem}%
\label{mainthm:automorphic_tensor_products_E2}
We have inclusions of internal direct sums
\begin{gather*}
  \varpi(\frake_{1,0}) \oplus \varpi(\Evec_2)
\;\subset\;
  \varpi(\frake_{1,0}) \cdot \varpi(E_2)
\;\subset\;
  \cA\big( G(\QQ) \backslash G(\bbA), \sym^1 \big)
\end{gather*}
and a short exact sequence
\begin{gather*}
  \varpi(\frake_{1,0}) \oplus \varpi(\Evec_2)
\lhra
  \varpi(\frake_{1,0}) \otimes \varpi(E_2)
\lthra
  \varpi
\end{gather*}
for a vector-valued automorphic representation
\begin{gather*}
  \varpi
\;\cong\;
  \cD(2)
  \,\otimes\,
  \sideset{}{'}{\bigotimes}_{v \ne \infty}\,
  \Pi_{\frac{1}{2},v}
\tx{.}
\end{gather*}
In particular,
\begin{gather*}
  \varpi(\Evec_2)
\;\cong\;
  \cD(0)
  \,\otimes\,
  \sideset{}{'}{\bigotimes}_{v \ne \infty}\,
  \Pi_{\frac{1}{2},v}
\tx{.}
\end{gather*}
\end{maintheorem}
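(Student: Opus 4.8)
The plan is to read off the structure of $\varpi(\frake_{1,0}) \otimes \varpi(E_2)$ by applying the tensor product functor $\varpi(\frake_{1,0}) \otimes \,\cdot\,$ to the Roy--Schmidt--Yi sequence and then separating the outcome by infinitesimal character at the infinite place. Since $\varpi(\frake_{1,0})$ is finite dimensional at~$\infty$ and trivial at the finite places, the functor $\varpi(\frake_{1,0}) \otimes \,\cdot\,$ is exact. Applying it to the sequence of Roy--Schmidt--Yi and using $\varpi(\frake_{1,0}) \otimes \bbone \cong \varpi(\frake_{1,0})$ yields the short exact sequence
\begin{gather*}
  \varpi(\frake_{1,0})
\lhra
  \varpi(\frake_{1,0}) \otimes \varpi(E_2)
\lthra
  \varpi(\frake_{1,0}) \otimes \Big( \cD(1) \otimes \sideset{}{'}{\bigotimes}_{v \ne \infty} \Pi_{\frac{1}{2},v} \Big)
\tx{.}
\end{gather*}
At the finite places the outer factors are unchanged because $\bbone_v \otimes \Pi_{\frac{1}{2},v} \cong \Pi_{\frac{1}{2},v}$, so the whole computation is local at~$\infty$, where it amounts to understanding $\sym^1 \otimes \cD(1)$ as a $(\frakg,K_\infty)$-module.

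First I would decompose $\sym^1 \otimes \cD(1)$. Comparing $K_\infty$-types, the lowest weight~$2$ of $\cD(1)$ is shifted by the weights~$\pm 1$ of $\sym^1$, so $\sym^1 \otimes \cD(1)$ has $K_\infty$-type of weight~$1$ with multiplicity one and each odd weight~$\ge 3$ with multiplicity two, which matches the $K_\infty$-types of $\cD(0) \oplus \cD(2)$ exactly. To promote this to an isomorphism of $(\frakg,K_\infty)$-modules I would invoke infinitesimal characters: $\sym^1$ and $\cD(2)$ both have Harish-Chandra parameter~$2$, while $\cD(0)$ has parameter~$0$. Since~$0$ and~$2$ are not Weyl-conjugate, $\cD(0)$ and $\cD(2)$ lie in distinct blocks, the extension between them vanishes, and hence $\sym^1 \otimes \cD(1) \cong \cD(0) \oplus \cD(2)$. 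Consequently the quotient term splits as an internal direct sum
\begin{gather*}
  \varpi(\frake_{1,0}) \otimes \Big( \cD(1) \otimes \sideset{}{'}{\bigotimes}_{v \ne \infty} \Pi_{\frac{1}{2},v} \Big)
\;\cong\;
  \Big( \cD(0) \otimes \sideset{}{'}{\bigotimes}_{v \ne \infty} \Pi_{\frac{1}{2},v} \Big)
  \oplus
  \Big( \cD(2) \otimes \sideset{}{'}{\bigotimes}_{v \ne \infty} \Pi_{\frac{1}{2},v} \Big)
\tx{,}
\end{gather*}
and I would name the two summands $\varpi(\Evec_2)$ and $\varpi$, respectively.

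Next I would split the middle term by the same block decomposition. The submodule $\varpi(\frake_{1,0})$ has parameter~$2$ at~$\infty$, as does $\varpi \cong \cD(2) \otimes \cdots$, whereas $\varpi(\Evec_2) \cong \cD(0) \otimes \cdots$ sits in the parameter~$0$ block. Writing $M = \varpi(\frake_{1,0}) \otimes \varpi(E_2)$ and decomposing it into block components, the parameter~$0$ part receives no contribution from the submodule and therefore equals $\varpi(\Evec_2)$, which is thus a direct summand of~$M$; its parameter~$2$ block $M_{[2]}$ is an extension of~$\varpi$ by~$\varpi(\frake_{1,0})$. This gives $M \cong \varpi(\Evec_2) \oplus M_{[2]}$ with $\varpi(\frake_{1,0}) \subset M_{[2]}$, hence the internal direct sum $\varpi(\frake_{1,0}) \oplus \varpi(\Evec_2) \subset M$ and the short exact sequence
\begin{gather*}
  \varpi(\frake_{1,0}) \oplus \varpi(\Evec_2)
\lhra
  \varpi(\frake_{1,0}) \otimes \varpi(E_2)
\lthra
  \varpi
\tx{,}
\end{gather*}
together with the identification $\varpi(\Evec_2) \cong \cD(0) \otimes \sideset{}{'}{\bigotimes}_{v \ne \infty} \Pi_{\frac{1}{2},v}$.

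It remains to realize these abstract modules inside $\cA(G(\QQ) \backslash G(\bbA), \sym^1)$. By Main Theorem~\ref{mainthm:automorphic_tensor_products} the product $\varpi(\frake_{1,0}) \cdot \varpi(E_2)$, the image of the multiplication map $M \to \cA(G(\QQ) \backslash G(\bbA), \sym^1)$, is vector-valued automorphic, and I would check that this map is injective on the submodule $\varpi(\frake_{1,0}) \oplus \varpi(\Evec_2)$. For the first summand this is immediate: it is the image of $\varpi(\frake_{1,0}) \otimes \bbone$, where $\bbone \subset \varpi(E_2)$ is the space of constants, so it maps to nonzero multiples of $\varpi(\frake_{1,0})$. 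For the second summand I would identify the lowest $K_\infty$-type of the parameter-$0$ block with a genuine holomorphic, modular vector-valued form assembled from explicit combinations of products of the $\frake_{j,1-j}$ with~$E_2$; this form is the analogue~$\Evec_2$ of~$E_2$. I expect this realization step to be the main obstacle: the block separation at~$\infty$ produces the splitting abstractly with no effort, but pinning down the parameter-$0$ summand as holomorphic vector-valued modular forms, and confirming that multiplication collapses neither summand, requires the explicit Fourier-expansion bookkeeping for the $\frake_{j,1-j}$ and their products with~$E_2$. The non-splitness of $M_{[2]}$ -- the representation-theoretic shadow of the Bringmann--Kudla extension class carried by $\varpi(E_2)_\infty$ -- is not needed for the stated sequence, though it is the reason the summand~$\varpi$ does not split off as holomorphic data.
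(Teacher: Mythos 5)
Your structural skeleton---applying the exact functor $\varpi(\frake_{1,0})\otimes\,\cdot\,$ to the Roy--Schmidt--Yi sequence~\eqref{eq:automorphic_form_E2_short_exact_sequence}, decomposing $\sym^1\otimes\cD(1)\cong\cD(0)\oplus\cD(2)$, and separating the two pieces by infinitesimal character at the infinite place---is exactly the paper's route; the paper phrases the separation as vanishing of $\Ext_{(\frakg,K_\infty)}$ between $\sym^1$ and $\cD(0)$ rather than as a block decomposition, but that is the same ingredient. There is, however, a genuine gap, and you flagged it yourself: your argument produces an abstract parameter-$0$ direct summand which you simply \emph{name} $\varpi(\Evec_2)$, whereas the theorem concerns the representation generated by the specific form $\Evec_2$ defined in~\eqref{eq:def:vector_valued_E2hol}. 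Nothing in your proposal ties the two together, and without that link you have also not proved the first display of the theorem: the inclusion of the internal direct sum $\varpi(\frake_{1,0})\oplus\varpi(\Evec_2)$ inside the product $\varpi(\frake_{1,0})\cdot\varpi(E_2)$ requires showing that $\tdEvec_2$ lies in that product at all, which your proof never does.

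The missing link is much cheaper than the ``Fourier-expansion bookkeeping'' you anticipate: it is the identity~\eqref{eq:E2vec_LR_basis}, $\Evec_2 = \frake_{1,0}E_2 - \mfrac{3}{\pi}\frake_{0,1}$, already established in Section~\ref{ssec:vector_valued_E2}. It immediately gives $\tdEvec_2 = \td\frake_{1,0}\td{E}_2 - \mfrac{3}{\pi}\td\frake_{0,1} \in \varpi(\frake_{1,0})\cdot\varpi(E_2)$; combined with $\varpi(\frake_{1,0}) = \varpi(\frake_{1,0})\cdot\varpi(1)\subset\varpi(\frake_{1,0})\cdot\varpi(E_2)$, which uses that the constant $\rmL_2 E_2$ lies in $\varpi(E_2)$ (as you also noted), this yields the claimed inclusions. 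Since $\Evec_2$ is holomorphic of weight~$1$ and $\tdEvec_2$ is $K_\rmf$-fixed, $\tdEvec_2$ is a nonzero spherical lowest-weight vector in the parameter-$0$ block; the weight-$1$, $K_\rmf$-fixed vectors of $\cD(0)\otimes\bigotimes'_{v\ne\infty}\Pi_{\frac{1}{2},v}$ form a line, and such a vector generates the whole module (the spherical vector of $\Pi_{\frac{1}{2},v}$ does not lie in $\mathrm{St}_v$, hence generates $\Pi_{\frac{1}{2},v}$), so the concretely defined $\varpi(\Evec_2)$ equals your abstract summand. This is precisely what the paper's lowest-$K_\infty$-type computation accomplishes. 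One further simplification: your worry about ``confirming that multiplication collapses neither summand'' is unnecessary, since Theorem~\ref{thm:automorphic_tensor_products} already supplies the isomorphism $\varpi(\frake_{1,0})\otimes\varpi(E_2)\cong\varpi(\frake_{1,0})\cdot\varpi(E_2)$ because $\varpi(E_2)$ is a subrepresentation of $\cA(G(\QQ)\backslash G(\bbA))$, so the tensor product and the product may be identified outright.
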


The irreducible direct summand that splits off~$\varpi(\frake_{1,0}) \otimes \varpi(E_2)$ in Theorem~\ref{mainthm:automorphic_tensor_products_E2} is a natural source for our construction of~$\Evec_2$. The remaining direct summand corresponds to a vector-valued harmonic Maa\ss{} form in weight~$3$ that Bringmann--Kudla encounter in their case III(b).

In Section~\ref{sec:modular_forms}, we revisit some of the classical theory of modular forms and recall the functions~$\frake_{j,\sfd-j}$ from the work of Bringmann--Kudla including their images under the Maa\ss{} operators. In Section~\ref{sec:vector_valued_eisenstein_series}, we define the vector-valued analogue~$\Evec_2$ of the weight-$2$ Eisenstein series. We give two expressions for it: One as a linear combination of~$\frake_{j,\sfd-j}$ with coefficients in modular forms, and another one in terms of vector-valued Eisenstein series and their residues. In Section~\ref{sec:automorphic_forms}, we revisit some of the connection between modular forms and automorphic forms. We also recall the definition of principal series, restate one of the results by Roy--Schmidt--Yi, and establish a related statement on subrepresentations of the space of automorphic forms. In the final Section~\ref{sec:vector_valued_automorphic_forms}, we introduce vector-valued automorphic forms and corresponding automorphic representations. Theorem~\ref{mainthm:automorphic_tensor_products} is a combination of Theorem~\ref{thm:finite_vector_valued_automorphic_representation} and Theorem~\ref{thm:automorphic_tensor_products}, which we state and prove in this section. We conclude the paper with Theorem~\ref{thm:automorphic_tensor_products_E2}, which encompasses the statement of Theorem~\ref{mainthm:automorphic_tensor_products_E2}, and its Corollaries~\ref{cor:E2vec_automophic_representation} and~\ref{cor:BK_automophic_representation}, which present the consequences for~$\Evec_2$ and the example of Bringmann--Kudla.

\subsection*{Acknowledgement}

The author is grateful to Claudia Alfes-Neumann for fruitful discussions and comments. He thanks the Institut Mittag-Leffler, where parts of this work was conducted during the program on Moduli and Algebraic Cycles.


\section{Modular forms}
\label{sec:modular_forms}

In this section, we revisit some basic notions of the theory of scalar- and vector-valued modular forms. We also recall some of the special vector-valued modular forms that occurred in the work of Bringmann--Kudla~\cite{bringmann-kudla-2018} and Zemel~\cite{zemel-2015}.

\subsection{Preliminaries}

Given a complex number~$z$, we set~$e(z) = \exp(2 \pi i\, z)$. Throughout the text, we set~$S = \begin{psmatrix} 0 & -1 \\ 1 & 0 \end{psmatrix}$ and~$T = \begin{psmatrix} 1 & 1 \\ 0 & 1 \end{psmatrix}$, which are generators for~$\SL{2}(\ZZ)$. We let~$\Ga_\infty = \{ \pm T^n \,:\, n \in \ZZ\}$. The Poincar\'e upper half plane will be written as~$\HS$. Its elements will be commonly denoted~$\tau = x + i y$, $x, y \in \RR$. It carries an action of~$\SL{2}(\RR)$ by M\"obius transformations:
\begin{gather*}
  g \tau
=
  \mfrac{a \tau + b}{c \tau + d}
\tx{,}\quad
  \tau \in \HS
\tx{,}\,
  g = \begin{psmatrix} a & b \\ c & d \end{psmatrix} \in \SL{2}(\RR)
\tx{.}
\end{gather*}
We have a surjection~$\SL{2}(\RR) \thra \HS$ of~$\SL{2}(\RR)$\nbd sets, which maps~$g$ to~$g i$ and whose kernel is~$\SO{2}(\RR)$.

For integers~$k$, we have slash actions on functions from~$\HS$ to~$\CC$, defined by
\begin{gather*}
  \big( f \big|_k\, g \big)(\tau)
=
  (c \tau + d)^{-k} f(g \tau)
\tx{,}\quad
  g = \begin{psmatrix} a & b \\ c & d \end{psmatrix} \in \SL{2}(\RR)
\tx{.}
\end{gather*}
The compatibility between these actions and the action of~$\SL{2}(\RR)$ by right shift on functions from~$\SL{2}(\RR)$ to~$\CC$, which is standard in automorphic representation theory, will be explained in Section~\ref{sec:automorphic_forms}.

We obtain the usual notion of holomorphic modular forms of weight~$k \in \ZZ$. Using the Maa\ss{} lowering and raising operators and the Laplace operator
\begin{gather}
\label{eq:def:maass_operators}
  \rmL_k := - 2 i y^2 \partial_{\ov\tau}
\tx{,}\quad
  \rmR_k := 2 i \partial_\tau + k y^{-1}
\tx{,}\quad\tx{and}\quad
  \Delta_k := - \rmR_{k-2} \rmL_k
\tx{,}
\end{gather}
we define an almost holomorphic modular form of weight~$k$ as a smooth function~$f :\, \HS \ra \CC$ subject to the condition that
\begin{enumerateroman*}
\item
$f$ is almost holomorphic, i.e., $\rmL_k^{d+1}\, f = 0$ for some nonnegative integer~$d$.

\item
$f |_k\,\ga = f$ for every~$\ga \in \SL{2}(\ZZ)$,

\item
$|f(x + i y)| \ll y^a$ for some~$a > 0$ uniformly in~$x$ as~$y \ra \infty$.
\end{enumerateroman*}

\subsection{Vector-valued modular forms}

We define an arithmetic type as a finite dimensional, complex representation of~$\SL{2}(\ZZ)$. We write~$V(\rho)$ for the representation space of such an arithmetic type~$\rho$. We have a vector-valued slash action on functions $f :\, \HS \ra V(\rho)$ defined by
\begin{gather}
  f \big|_{k,\rho}\,\ga
:=
  \rho(\ga)^{-1}\, \big( f \big|_k\,\ga \big)
\tx{.}
\end{gather}
We say that a function~$f :\, \HS \ra V(\rho)$ is a holomorphic modular form of weight~$k$ and (arithmetic) type~$\rho$ if
\begin{enumerateroman*}
\item
$f$ is holomorphic,

\item
$f |_{k,\rho}\,\ga = f$ for all~$\ga \in \SL{2}(\ZZ)$,

\item
$\|f(x + i y)\| \ll y^a$ for some~$a > 0$ locally uniformly in~$x$ as~$y \ra \infty$ for any norm~$\| \,\cdot\, \|$ on~$V(\rho)$.
\end{enumerateroman*}
The space of such forms is written as~$\rmM_k(\rho)$.

There are analogous notions of real-analytic modular forms. In particular, we have a space~$\rmM^\ahol_k(\rho)$ of almost holomorphic modular forms, which vanish under a power of the lowering operator defined in~\eqref{eq:def:maass_operators}, and the notion of harmonic Maa\ss{} forms, which vanish under the Laplace operator~$\Delta_k$.

\paragraph{Symmetric powers}

Given a nonnegative integer~$\sfd$, we let~$\sym^\sfd$ be the symmetric power of the standard representation of~$\SL{2}$ over~$\CC$, which is an arithmetic type. When performing calculations, we will employ the realization on the space~$\CC[X]_\sfd$ of polynomials in a formal variable~$X$ of degree at most~$\sfd$ with the usual~$|_{-\sfd}$ action. In order to differentiate between the action on~$\tau$ and the action on~$X$, we will use two indices for the slash action and suppress the negative sign for~$\sfd$, writing~$|_{k,\sfd}$. Specifically, given a function~$f :\, \HS \ra \CC[X]_\sfd$, we set
\begin{gather*}
  \big( f \big|_{k,\sfd}\, g \big)(\tau)(X)
\;:=\;
  (c X + d)^\sfd
  (c \tau + d)^{-k}
  f(g \tau)(g X)
\in
  \CC[X]_\sfd
\tx{,}\quad
  g = \begin{psmatrix} a & b \\ c & d \end{psmatrix} \in \SL{2}(\RR)
\tx{.}
\end{gather*}
We record for clarity and for later use that
\begin{gather*}
  X^j \big|_{0,\sfd} S
=
  (-1)^j X^{\sfd-j}
\tx{,}
\quad
  1 |_{0,1} S = X
\tx{,}\,
\quad
  X |_{0,1} S = -1
\tx{.}
\end{gather*}
We write~$\rmM_{k,\sfd}$ and~$\rmM^\ahol_{k,\sfd}$ for~$\rmM_k(\sym^\sfd)$ and~$\rmM^\ahol_k(\sym^\sfd)$, that is, the associated spaces of holomorphic and almost holomorphic modular forms.

We have a vector-valued holomorphic modular form~$( X - \tau )^\sfd$ of weight~$-\sfd$ and arithmetic type~$\sym^\sfd$; See for example~\cite{mertens-raum-2021}. More generally, Zemel~\cite{zemel-2015} and later Bringmann-Kudla~\cite{bringmann-kudla-2018} provided a family of almost holomorphic modular forms, which goes back to at least as early as Verdier~\cite{verdier-1961}, that will be convenient in the context of raising and lowering operators. For an integer~$0 \le j \le \sfd$, we adapt the notation of Bringmann--Kudla and set
\begin{gather}
\label{eq:def:symd_LR_basis}
  \frake_{j,\sfd-j}(\tau)
:=
  \mfrac{(-1)^{\sfd-j}}{j!}
  y^{j-\sfd}
  (X - \tau)^j
  (X - \ov\tau)^{\sfd-j}
\;\in\;
  \rmM^\ahol_{\sfd-2j,\sfd}
\tx{.}
\end{gather}
By a direct calculation, we can verify the stronger modular invariance condition with respect to the real Lie group:
\begin{gather}
\label{eq:symd_LR_basis_SL2Rmodular}
  \forall g \in \SL{2}(\RR) \quantsep
  \frake_{j,\sfd-j} \big|_{\sfd-2j,\sfd} g
=
  \frake_{j,\sfd-j}
\tx{.}
\end{gather}

As an almost special case of Proposition~3.1 of~\cite{zemel-2015} in the spirit of Kuga--Shimura~\cite{kuga-shimura-1960}, we record that
\begin{gather}
\label{eq:vector_valued_almost_holomorphic_decomposition}
  \rmM^\ahol_{k,d}
\;=\;
  \bigoplus_{j = 0}^\sfd
  \frake_{j,\sfd-j} \cdot \rmM^\ahol_{k+\sfd-2j}
\tx{.}
\end{gather}
There is no simple, analogous statement for holomorphic modular forms that holds for all weights~$k$; see Theorem~3.4 of~\cite{zemel-2015}.

\subsection{Maa\ss{} operators on vector-valued modular forms}
\label{ssec:vector_valued_eisenstein_series_maass_operators}

We revisit the covariance properties of the classical Maa\ss{} operators in~\eqref{eq:def:maass_operators} in the vector-valued setting, including a sketch of a proof that circumvents the explicit verification of covariance. We claim that for all smooth functions~$f :\, \HS \ra \CC[X]_\sfd$ and all~$g \in \SL{2}(\RR)$, we have
\begin{gather}
\label{eq:symd_maass_operators_covariant}
  \rmL_k\big( f \big|_{k,\sfd}\,g \big)
\;=\;
  \big( \rmL_k\, f \big) \big|_{k-2,\sfd}\,g
\quad\tx{and}\quad
  \rmR_k\big( f \big|_{k,\sfd}\,g \big)
\;=\;
  \big( \rmL_R\, f \big) \big|_{k+2,\sfd}\,g
\tx{.}
\end{gather}

Since the lowering and raising operators are graded derivations, we have, for a smooth function~$f :\, \HS \ra \CC$ and nonnegative integers~$i$ and~$j$, the relations
\begin{align*}
  \rmL_{k+j-i} \big( \frake_{i,j} f \big)
&{}=
  \big( \rmL_{j-i}\, \frake_{i,j} \big) f
  +
  \frake_{i,j} \big( \rmL_k\, f\big) 
\tx{,}\\
  \rmR_{k+j-i} \big( \frake_{i,j} f \big)
&{}=
  \big( \rmR_{j-i}\, \frake_{i,j} \big) f
  +
  \frake_{i,j} \big( \rmR_k\, f\big) 
\tx{.}
\end{align*}
Further, Bringmann-Kudla~\cite{bringmann-kudla-2018} calculate the action of lowering and raising operators on~$\frake_{i,j}$:
\begin{gather}
\label{eq:symd_LR_basis_maass_operators}
  \rmL_{j-i} \frake_{i,j}
=
  (i+1) j \frake_{i+1,j-1}
\tx{,}\quad
  \rmR_{j-i} \frake_{i,j}
=
  \frake_{i-1,j+1}
\tx{.}
\end{gather}

Now, the modular invariance of~$\frake_{i,j}$ under~$\SL{2}(\RR)$ implies the covariance in~\eqref{eq:symd_maass_operators_covariant}, since we have for all~$g \in \SL{2}(\RR)$
\begin{alignat*}{6}
  \rmL_{j-i} \big( \frake_{i,j} \big|_{j-i,i+j} g \big)
&{}=
  \rmL_{j-i} &&\frake_{i,j}
&&{}=
  (i+1) j\, &&\frake_{i+1,j-1}
&&{}=
  (i+1) j\, &&\frake_{i+1,j-1} \big|_{j-i-2,i+j} g
\\
  \rmR_{j-i} \big( \frake_{i,j} \big|_{j-i,i+j} g \big)
&{}=
  \rmR_{j-i} &&\frake_{i,j}
&&{}=
  &&\frake_{i-1,j+1}
&&{}=
  &&\frake_{i-1,j+1} \big|_{j-i+2,i+j} g
\tx{.}
\end{alignat*}

\subsection{Real-analytic Eisenstein series}%
\label{ssec:real_analytic_eisenstein_series}

For~$k \in \ZZ$ and~$s \in \CC$, $2 \Re(s) + k > 2$, we define
\begin{gather}
  E_k(\tau, s)
\;:=\;
  \sum_{\ga \in \Ga_\infty \backslash \SL{2}(\ZZ)}
  y^s \big|_k\,\ga
\tx{.}
\end{gather}
If~$k > 2$, we set~$E_k(\tau) = E_k(\tau,0)$.

Employing Maa\ss's formula on pages~210 of~\cite{maass-1964}, which provides the Fourier expansion of the modified Eisenstein series~$y^{s+k-1}\, E_k(\tau, s)$, and Maa\ss's modified Whittaker function on page~181 of~\cite{maass-1964}, we find that 
\begin{multline}
\label{eq:classical_eisenstein_fourier_expansion}
  E_k(\tau, s)
\;=\;
  y^s
  \,+\,
  (-1)^{\frac{k}{2}}
  2^{2-k-2s}
  \pi
  \frac{\Ga(k-1+2s)}{\Ga(s) \Ga(k+s)}\,
  \frac{\zeta(k-1+2s)}{\zeta(k + 2s)}
  y^{1-k-s}
\\
  +\,
  \frac{ (-1)^{\frac{k}{2}} 2^{s+\frac{k}{2}} \pi^{k+2s}}{\zeta(k + 2s)}
  \sum_{\substack{n \in \ZZ\\n \ne 0}}
  \frac{\sigma_{k-1+2s}(|n|)}{\Ga\big(s + (1 + \sgn(n)) \frac{k}{2}\big)}\,
  y^{\frac{k}{2}}
  W_{\sgn(n)\frac{k}{2}, \frac{k-1}{2} + s}(4 \pi |n| y)\,
  e(n x)
\tx{.}
\end{multline}

Analytic continuation via the Fourier expansion allows us to evaluate at~$s = 0$ even if~$k = 2$. We set~$E_2(\tau) = E_2(\tau,0)$ and find (see also p.~19 of~\cite{bruinier-van-der-geer-harder-zagier-2008})
\begin{gather*}
  E_2(\tau)
\;=\;
  \Ehol_2(\tau)
  -
  \mfrac{3}{\pi} y^{-1}
\tx{,}\quad
  \Ehol_2(\tau)
\;=\;
  1
  \,-\,
  24 \sum_{n = 1}^\infty \sigma_1(n) e(n \tau)
\tx{.}
\end{gather*}
Observe that~$\Ehol_2$ is not a modular form. Instead, it is both a quasi-modular form and a mock modular form; One of the few instances where these two notions overlap. Correspondingly, $E_2$ is both an almost holomorphic modular form and a harmonic Maa\ss{} form.

\section{Vector-valued Eisenstein series}
\label{sec:vector_valued_eisenstein_series}

In this section, we define the vector-valued analogue~$\Evec_2$ of the quasi-modular holomorphic Eisenstein series in weight~$2$ and provide two constructions that reveal its modularity. In~\eqref{eq:E2vec_LR_basis}, we express it as a linear combination of Bringmann--Kudla's~$\frake_{j,1-j}$ with coefficients in almost holomorphic modular forms. In Sections~\ref{ssec:vector_valued_real_analytic_eisenstein_series}, we define real-analytic vector-valued Eisenstein series. This allows us to express~$\Evec_2$ in terms of a residue of Eisenstein series and an Eisenstein series.

\subsection{An analogue of the weight-$2$ Eisenstein series}
\label{ssec:vector_valued_E2}

We define a vector-valued modular form that combines features of~$E_2$ and~$\Ehol_2$. On the one hand it is modular, on the other hand it is holomorphic. It has weight~$1$, but nevertheless we write~$\Evec_2$ in order to emphasize the connection to~$E_2$ and $\Ehol_2$.
\begin{gather}
\label{eq:def:vector_valued_E2hol}
  \Evec_2(\tau)
\;:=\;
  (X - \tau) \Ehol_2(\tau) - \mfrac{6}{\pi i}
\in
  \CC[X]_1
\tx{.}
\end{gather}
We can recover~$\Ehol_2$ as the~$X$\nbd component of the vector-valued holomorphic modular form~$\Evec_2$.

It is also helpful to express~$\Evec_2$ in terms of the basis~$\frake_{j,1-j}(\tau)$. We have
\begin{gather}
\label{eq:E2vec_LR_basis}
  \Evec_2(\tau)
\;=\;
  \frake_{1,0}(\tau)
  E_2(\tau)
  -
  \frake_{0,1}(\tau)
  \mfrac{3}{\pi}
\;=\;
  (X - \tau)
  \big( \Ehol_2(\tau) - \mfrac{3}{\pi} y^{-1}\big)
  -
  \big( - y^{-1} (X - \ov\tau) \big)
  \mfrac{3}{\pi}
\tx{.}
\end{gather}
This manifests the connection to the modular Eisenstein series~$E_2$.

\begin{proposition}
We have
\begin{gather*}
  \Evec_2 \in \rmM_{1,1}
\tx{.}
\end{gather*}
\end{proposition}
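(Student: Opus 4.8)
The plan is to verify the three defining conditions for membership in $\rmM_{1,1} = \rmM_1(\sym^1)$ separately: holomorphy, invariance under the $|_{1,1}$ action, and the growth condition. Holomorphy and growth I would read off directly from the defining formula~\eqref{eq:def:vector_valued_E2hol}. Since $\Ehol_2$ is a holomorphic $q$\nbd series, $(X-\tau)\Ehol_2(\tau) - \tfrac{6}{\pi i}$ is visibly a holomorphic function of~$\tau$ with values in~$\CC[X]_1$. For the growth, as $y \to \infty$ one has $\Ehol_2(\tau) \to 1$ while the coefficients of the polynomial $(X-\tau)$ grow at most linearly in~$y$; hence any norm of $\Evec_2(x+iy)$ is $\ll y$ locally uniformly in~$x$, so condition~(iii) holds with $a = 1$.

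For the modularity I would switch to the expression~\eqref{eq:E2vec_LR_basis}, $\Evec_2 = \frake_{1,0}\,E_2 - \frake_{0,1}\,\tfrac{3}{\pi}$, and exploit the multiplicativity of the slash action: for a scalar form~$h$ of weight~$a$ and a $\CC[X]_\sfd$\nbd valued function~$f$, the defining formula of $|_{k,\sfd}$ gives $(h f)|_{a+b,\sfd}\,\ga = (h|_a\,\ga)\,(f|_{b,\sfd}\,\ga)$. By~\eqref{eq:def:symd_LR_basis}, $\frake_{1,0}$ has weight~$-1$ and $\frake_{0,1}$ has weight~$1$, both of type~$\sym^1$, while $E_2$ has weight~$2$; so both summands $\frake_{1,0}E_2$ and $\frake_{0,1}\tfrac{3}{\pi}$ indeed carry weight~$1$ and type~$\sym^1$, matching the target.

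It then remains to invoke invariance factor by factor. The functions $\frake_{j,1-j}$ are invariant under all of $\SL{2}(\RR)$ by~\eqref{eq:symd_LR_basis_SL2Rmodular}, so in particular $\frake_{1,0}|_{-1,1}\,\ga = \frake_{1,0}$ and $\frake_{0,1}|_{1,1}\,\ga = \frake_{0,1}$ for every $\ga \in \SL{2}(\ZZ)$; and $E_2 = E_2(\tau,0)$ is $\SL{2}(\ZZ)$\nbd modular of weight~$2$, being the analytically continued value of a sum over $\Ga_\infty \backslash \SL{2}(\ZZ)$. Combining these through multiplicativity yields $(\frake_{1,0}E_2)|_{1,1}\,\ga = \frake_{1,0}E_2$ and $(\frake_{0,1}\tfrac{3}{\pi})|_{1,1}\,\ga = \frake_{0,1}\tfrac{3}{\pi}$, whence $\Evec_2|_{1,1}\,\ga = \Evec_2$.

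The conceptual obstacle --- and the actual content of the proposition --- is the tension between the two required properties: the holomorphic function $\Ehol_2$ is only quasimodular, whereas the genuinely modular $E_2$ fails to be holomorphic. The resolution is that the nonholomorphic correction $-\tfrac{3}{\pi}y^{-1}$ concealed in $E_2$ is exactly cancelled by the $\frake_{0,1}$\nbd term, since $\frake_{0,1} = -y^{-1}(X-\ov\tau)$ contributes precisely the antiholomorphic piece needed. The single computation I would carry out in full is therefore the elementary identity $\tau - \ov\tau = 2 i y$ that reconciles~\eqref{eq:def:vector_valued_E2hol} with~\eqref{eq:E2vec_LR_basis}; once this cancellation is checked, holomorphy follows from the first formula and modularity from the second, and the two together are exactly the assertion $\Evec_2 \in \rmM_{1,1}$.
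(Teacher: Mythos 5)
Your proof is correct and takes essentially the same route as the paper: the paper's proof likewise reads off modular invariance (membership in $\rmM^\ahol_{1,1}$) from the expression~\eqref{eq:E2vec_LR_basis} and holomorphy from the defining formula~\eqref{eq:def:vector_valued_E2hol}. The extra details you supply---multiplicativity of the slash action, the growth estimate, and the check via $\tau - \ov\tau = 2iy$ that the two expressions for $\Evec_2$ agree---just make explicit what the paper leaves implicit.
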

\begin{proof}
We have~$\Evec_2 \in \rmM^\ahol_{1,1}$ by~\eqref{eq:E2vec_LR_basis}. Further, $\Evec_2$ is holomorphic by its defining expression~\eqref{eq:def:vector_valued_E2hol}.
\end{proof}

\begin{remark}
It is instructive in the context of Section~\ref{ssec:automorphic_tensor_products} to verify that~$\Evec_2$ is holomorphic from the expression in~\eqref{eq:E2vec_LR_basis}:
\begin{gather*}
  \rmL_1 \Evec_2
=
  \rmL_1 \big( \frake_{1,0} E_2 - \frake_{0,1} \mfrac{3}{\pi} \big)
=
  \frake_{1,0} \mfrac{3}{\pi} - \frake_{1,0} \mfrac{3}{\pi}
=
  0
\tx{.}
\end{gather*}
\end{remark}

\subsection{Real-analytic Eisenstein series}
\label{ssec:vector_valued_real_analytic_eisenstein_series}

We extend the real-analytic Eisenstein series in Section~\ref{ssec:real_analytic_eisenstein_series} to the vector-valued setting and relate it to the former. For a weight~$k \in \ZZ$, $\sfd \in \ZZ_{\ge 0}$ with $k \equiv \sfd \,\pmod{2}$, $j \in \ZZ$ with~$0 \le j \le \sfd$, and~$s \in \CC$ with~$2 \Re(s) + k > 2 + \sfd$, we define
\begin{gather}
\label{eq:def:vector_valued_real_analytic_eisenstein_series}
  E_{k,\sfd}(\tau,j,s)
\;:=\;
  \sum_{\ga \in \Ga_\infty \backslash \SL{2}(\ZZ)}
  (X - \tau)^j y^s \big|_{k,\sfd}\, \ga
\tx{.}
\end{gather}
Obverse that~$X - \tau$ is invariant under the action of~$\begin{psmatrix} 1 & b \\ 0 & 1 \end{psmatrix}$, $b \in \RR$, for any~$k$ and~$\sfd$. The parity condition on~$k$ and~$\sfd$ ensures invariance under~$\pm \begin{psmatrix} 1 & 0 \\ 0 & 1 \end{psmatrix}$. Hence the action of~$\ga$ in the summand only depends on~$\Ga_\infty \ga$. The condition on~$\Re(s)$ ensures absolute convergence by usual estimates.

To provide an analytic continuation of~$E_{k,d}(\,\cdot\,,j,s)$ we contrast it with a type of Eisenstein series that directly emerges from~\eqref{eq:vector_valued_almost_holomorphic_decomposition}. We set
\begin{gather}
\label{eq:def:vector_valued_eisenstein_series_product}
  E^\ahol_{k,\sfd}(\tau, j, s)
\;:=\;
  \sum_{\ga \in \Ga_\infty \backslash \SL{2}(\ZZ)}
  \frake_{j,\sfd-j} y^s \big|_{k,\sfd}\, \ga
\tx{.}
\end{gather}
As opposed to the Eisenstein series in~\eqref{eq:def:vector_valued_real_analytic_eisenstein_series}, the one in~\eqref{eq:def:vector_valued_eisenstein_series_product} is in general not holomorphic even for~$s = 0$, as~$\frake_{j,\sfd-j}$ is merely almost holomorphic. We can, however, readily identify them as products of~$\frake_{j,\sfd-j}$ with classical Eisenstein series.

\begin{proposition}
\label{prop:vector_valued_eisenstein_series_product}
Given integers~$k$, $\sfd \ge 0$, and~$0 \le j \le \sfd$, and a complex number~$s$ with~$2 \Re(s) + k > 2 + \sfd$, we have
\begin{gather}
\label{eq:prop:vector_valued_eisenstein_series_product}
  E^\ahol_{k,\sfd}(\tau, j, s)
\;=\;
  \frake_{j,\sfd-j} \cdot E_{k-\sfd+2j}(\tau,s)
\tx{.}
\end{gather}
In particular, the Eisenstein series in~\eqref{eq:def:vector_valued_eisenstein_series_product} admits an analytic continuation to~$s \in \CC$ with at most simple poles.
\end{proposition}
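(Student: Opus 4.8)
The plan is to reduce the vector-valued series in~\eqref{eq:def:vector_valued_eisenstein_series_product} directly to the scalar one by pulling the fixed function~$\frake_{j,\sfd-j}$ out of the summation, exploiting its strong~$\SL{2}(\RR)$\nbd invariance. The first step is to observe that the combined slash action~$|_{k,\sfd}$ splits multiplicatively across the two factors~$\frake_{j,\sfd-j}$ and~$y^s$ in each summand. Since~$\frake_{j,\sfd-j}$ is valued in~$\CC[X]_\sfd$ and carries weight~$\sfd-2j$, while~$y^s$ is scalar, I would verify, for~$g = \begin{psmatrix} a & b \\ c & d \end{psmatrix} \in \SL{2}(\RR)$, the factorization
\[
  \big( \frake_{j,\sfd-j}\, y^s \big) \big|_{k,\sfd}\, g
  \;=\;
  \big( \frake_{j,\sfd-j} \big|_{\sfd-2j,\sfd}\, g \big)
  \cdot
  \big( y^s \big|_{k-\sfd+2j}\, g \big)
\tx{.}
\]
This follows by comparing both sides: the symmetric-power factor~$(cX+d)^\sfd$ and the substitution~$X \mapsto gX$ are supplied entirely by~$\frake_{j,\sfd-j}$, while the automorphy factors combine as~$(c\tau+d)^{-(\sfd-2j)}(c\tau+d)^{-(k-\sfd+2j)} = (c\tau+d)^{-k}$, matching the weight-$k$ slash on the left.

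Next I would invoke the~$\SL{2}(\RR)$\nbd invariance~\eqref{eq:symd_LR_basis_SL2Rmodular}, namely~$\frake_{j,\sfd-j} \big|_{\sfd-2j,\sfd}\, g = \frake_{j,\sfd-j}$, to conclude that each summand equals~$\frake_{j,\sfd-j} \cdot \big( y^s \big|_{k-\sfd+2j}\, \ga \big)$. As~$\frake_{j,\sfd-j}$ no longer depends on~$\ga$, it factors out of the sum over~$\Ga_\infty \backslash \SL{2}(\ZZ)$, and what remains is precisely the defining series of~$E_{k-\sfd+2j}(\tau,s)$, yielding~\eqref{eq:prop:vector_valued_eisenstein_series_product}. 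Before pulling out, I should check that the scalar slash series is well posed: the parity condition~$k \equiv \sfd \pmod{2}$ forces~$k-\sfd+2j$ to be even, so~$y^s |_{k-\sfd+2j}\, \ga$ descends to~$\Ga_\infty \backslash \SL{2}(\ZZ)$, and the convergence range~$2\Re(s)+k > 2+\sfd$ together with~$j \ge 0$ gives~$2\Re(s) + (k-\sfd+2j) > 2$, the classical convergence threshold.

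For the analytic continuation, I would appeal to the Fourier expansion~\eqref{eq:classical_eisenstein_fourier_expansion} of~$E_{k-\sfd+2j}(\tau,s)$, which furnishes a meromorphic continuation in~$s$ to all of~$\CC$ with at most simple poles. Because~$\frake_{j,\sfd-j}$ is a fixed smooth function independent of~$s$, multiplying the continued scalar series by it leaves both the meromorphy and the simple-pole structure intact, giving the claimed continuation of~$E^\ahol_{k,\sfd}(\,\cdot\,,j,s)$.

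The only genuinely delicate point is the multiplicative factorization in the first step, where the weight must be split as~$k = (\sfd-2j) + (k-\sfd+2j)$ with the entire symmetric-power variable~$X$ carried by~$\frake_{j,\sfd-j}$ and none by~$y^s$. Once the bookkeeping of automorphy factors and the parity are checked, the remainder is a formal pull-out from the summation and a citation of the classical continuation.
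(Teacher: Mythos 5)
Your proposal is correct and follows essentially the same route as the paper's own proof: split the slash action~$|_{k,\sfd}$ multiplicatively into a weight-$(\sfd-2j)$ factor on~$\frake_{j,\sfd-j}$ and a weight-$(k-\sfd+2j)$ scalar factor on~$y^s$, invoke the~$\SL{2}(\RR)$\nbd invariance~\eqref{eq:symd_LR_basis_SL2Rmodular} to pull~$\frake_{j,\sfd-j}$ out of the sum, and recognize the classical Eisenstein series, whose known continuation yields the final claim. Your added checks on parity, convergence, and the continuation are sound and merely make explicit what the paper leaves implicit.
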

\begin{proof}
We use the modular $\SL{2}(\RR)$-invariance of~$\frake_{j,\sfd-j}$ in~\eqref{eq:symd_LR_basis_SL2Rmodular}. Then after inserting the definition of the Eisenstein series, we find that
\begin{align*}
&
  E^\ahol_{k,\sfd}(\tau, j, s)
\;=\;
  \sum_{\ga \in \Ga_\infty \backslash \SL{2}(\ZZ)}
  \frake_{j,\sfd-j} y^s \big|_{k,\sfd}\, \ga
\\
\;=\;{}&
  \sum_{\ga \in \Ga_\infty \backslash \SL{2}(\ZZ)}
  \big( \frake_{j,\sfd-j} \big|_{\sfd-2j,\sfd}\, \ga \big)
  \big( y^s \big|_{k+\sfd-2j,\sfd}\, \ga \big)
\;=\;
  \sum_{\ga \in \Ga_\infty \backslash \SL{2}(\ZZ)}
  \frake_{j,\sfd-j}\,
  \big( y^s \big|_{k-\sfd+2j,\sfd}\, \ga \big)
\\
\;=\;{}&
  \frake_{j,\sfd-j}
  \sum_{\ga \in \Ga_\infty \backslash \SL{2}(\ZZ)}
   y^s \big|_{k-\sfd+2j}\, \ga
\;=\;
  \frake_{j,\sfd-j} \cdot E_{k-\sfd+2j}(\tau,s)
\tx{.}
\end{align*}
\end{proof}

The Eisenstein series~$E^\ahol_{k,\sfd}(\tau,j,s)$ and~$E_{k,\sfd}(\tau,j,s)$ can be related to each other via an explicit formula, which we make precise in the next statement.
\begin{proposition}
\label{prop:vector_valued_real_analytic_eisenstein_series_as_product}
Given integers~$k$, $\sfd \ge 0$, and~$0 \le j \le \sfd$, and a complex number~$s$ with~$2 \Re(s) + k > 2 + \sfd$, we have
\begin{gather}
\label{eq:prop:vector_valued_real_analytic_eisenstein_series_as_product}
\begin{aligned}
  E_{k,\sfd}(\tau, j, s)
&{}=
  \big( \mfrac{i}{2} \big)^{\sfd-j}
  \sum_{r = j}^\sfd
  \mbinom{\sfd-j}{r-j}
  r!\,
  \frake_{r,\sfd-r}
  E_{k-\sfd+2r-2j}(\tau, s-r+j)
\\
&{}=
  \big( \mfrac{i}{2} \big)^{\sfd-j}
  \sum_{r = j}^\sfd
  \mbinom{\sfd-j}{r-j}
  r!\,
  E^\ahol_{k,\sfd}(\tau,r,s-r+j)
\tx{.}
\end{aligned}
\end{gather}
In particular, the Eisenstein series in~\eqref{eq:def:vector_valued_real_analytic_eisenstein_series} admits an analytic continuation to~$s \in \CC$ with at most simple poles.
\end{proposition}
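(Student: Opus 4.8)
The plan is to reduce the statement to a single pointwise algebraic identity between the two seed functions that generate the respective Poincaré series, and then to invoke the already-established Proposition~\ref{prop:vector_valued_eisenstein_series_product}. The key enabling observation is that both~$E_{k,\sfd}(\tau,j,s)$ and each~$E^\ahol_{k,\sfd}(\tau,r,s-r+j)$ are sums over~$\Ga_\infty \backslash \SL{2}(\ZZ)$ of a fixed~$\CC[X]_\sfd$-valued seed acted on by the \emph{same} slash action~$|_{k,\sfd}$. Since this action is linear in the function, it suffices to verify the seed identity
\[
  (X-\tau)^j y^s
  =
  \big( \tfrac{i}{2} \big)^{\sfd-j}
  \sum_{r=j}^{\sfd}
  \binom{\sfd-j}{r-j}
  r!\,
  \frake_{r,\sfd-r}(\tau)\,
  y^{s-r+j}
\]
as an identity of functions on~$\HS$, and then apply~$|_{k,\sfd}\,\ga$ term by term and sum over~$\ga$.

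To prove the seed identity I would substitute the explicit formula~\eqref{eq:def:symd_LR_basis} for~$\frake_{r,\sfd-r}$, writing~$u = X-\tau$ and~$v = X-\ov\tau$ so that~$\frake_{r,\sfd-r} = \frac{(-1)^{\sfd-r}}{r!} y^{r-\sfd} u^r v^{\sfd-r}$. The factorials cancel against the~$r!$, the powers of~$y$ combine into a common factor, and after reindexing~$r = j+l$ the polynomial part collapses by the binomial theorem to~$u^j (u-v)^{\sfd-j}$. Using~$u - v = \ov\tau - \tau = -2iy$, the factor~$(u-v)^{\sfd-j} = (-2iy)^{\sfd-j}$ combines with the prefactor~$\big(\tfrac{i}{2}\big)^{\sfd-j}$ and the remaining power of~$y$ so that everything reduces to~$y^s u^j$. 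The one point requiring care here is the bookkeeping of the constants: the prefactor is engineered precisely so that~$\tfrac{i}{2}\cdot(-2i) = 1$, which is what makes the whole~$(\sfd-j)$\nbd th power trivialize. This is the only genuine computation in the argument.

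With the seed identity in hand, applying~$|_{k,\sfd}\,\ga$ and summing over~$\Ga_\infty \backslash \SL{2}(\ZZ)$ turns the left-hand side into~$E_{k,\sfd}(\tau,j,s)$ and each summand on the right into~$E^\ahol_{k,\sfd}(\tau,r,s-r+j)$ by its very definition~\eqref{eq:def:vector_valued_eisenstein_series_product}; this yields the second line of~\eqref{eq:prop:vector_valued_real_analytic_eisenstein_series_as_product}. Here I would note that interchanging the finite linear combination with the infinite sum is legitimate because all series converge absolutely in the stated range: by Proposition~\ref{prop:vector_valued_eisenstein_series_product} the~$r$\nbd th term requires only~$2\Re(s) + k > 2 + \sfd - 2j$, which is implied by our hypothesis~$2\Re(s) + k > 2 + \sfd$ since~$j \ge 0$. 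The first line of~\eqref{eq:prop:vector_valued_real_analytic_eisenstein_series_as_product} then follows immediately by substituting the product formula~\eqref{eq:prop:vector_valued_eisenstein_series_product}.

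Finally, the analytic continuation with at most simple poles is inherited from the scalar case: the first line exhibits~$E_{k,\sfd}(\tau,j,s)$ as a finite~$\CC[X]_\sfd$-valued linear combination of the~$s$-independent functions~$\frake_{r,\sfd-r}$ times the scalar Eisenstein series~$E_{k-\sfd+2r-2j}(\tau,s-r+j)$, each of which continues meromorphically with at most simple poles (cf.\ the Fourier expansion~\eqref{eq:classical_eisenstein_fourier_expansion} and the ``in particular'' clause of Proposition~\ref{prop:vector_valued_eisenstein_series_product}). Since a finite sum of functions with at most simple poles again has at most simple poles, the claim follows. I expect no serious obstacle beyond the seed identity; the main thing to watch is precisely that the slash weight~$|_{k,\sfd}$ is identical on both sides, as this is what makes the Poincaré-series reduction valid.
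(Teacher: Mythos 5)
Your proof is correct and is essentially the paper's own argument: the paper reduces to the same pointwise seed identity
$(X-\tau)^j y^s = \big(\tfrac{i}{2}\big)^{\sfd-j}\sum_{r=j}^{\sfd}\binom{\sfd-j}{r-j}\,r!\,\frake_{r,\sfd-r}\,y^{s-r+j}$,
obtained there by expanding $(X-\tau)^j\,1^{\sfd-j}$ via $1=\tfrac{-1}{2iy}\big((X-\tau)-(X-\ov\tau)\big)$ and the binomial theorem (your computation run in the opposite direction), and then slashes termwise and sums over $\Ga_\infty\backslash\SL{2}(\ZZ)$; your convergence bookkeeping makes explicit a point the paper leaves implicit, and your observation that the condition $2\Re(s)+k>2+\sfd-2j$ is uniform in $r$ is accurate, since the weight shift $+2r$ compensates the parameter shift $-r$.

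One caveat worth flagging: substituting Proposition~\ref{prop:vector_valued_eisenstein_series_product} into the second line yields $E_{k-\sfd+2r}(\tau,s-r+j)$, \emph{not} $E_{k-\sfd+2r-2j}(\tau,s-r+j)$ as printed in the first line of~\eqref{eq:prop:vector_valued_real_analytic_eisenstein_series_as_product}, so the first line does not follow ``immediately'' as you assert --- it cannot, because the printed subscript is a typo (it is inconsistent with the second line unless $j=0$, and for $k=\sfd=j=1$, $s=0$ it would give $\frake_{1,0}E_0(\tau,0)$ rather than the evaluation $E_{1,1}(\tau,1,0)=\frake_{1,0}E_2(\tau)$ used later in the paper). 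Your derivation, like the paper's own proof, establishes the corrected version, so this is a defect of the statement rather than of your argument.
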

\begin{proof}
This follows when expanding~$(X-\tau)^j 1^{\sfd-j}$ according to the following equations:
\begin{gather*}
  1
=
  \mfrac{-1}{2iy} \big( (X - \tau) - (X - \ov{\tau}) \big)
\tx{,}
\end{gather*}
and hence
\begin{align*}
&
  (X - \tau)^j 1^{\sfd-j}
\;=\;
  \big( \mfrac{i}{2y} \big)^{\sfd-j}
  (X - \tau)^j
  \big( (X - \tau) - (X - \ov{\tau}) \big)^{\sfd-j}
\\
=\;{}&
  \big( \mfrac{i}{2y} \big)^{\sfd-j}
  \sum_{r = 0}^{\sfd-j}
  \mbinom{\sfd-j}{r}
  (-1)^{\sfd-j-r}
  (X - \tau)^{j+r}
  (X - \ov{\tau})^{\sfd-j-r}
\\
=\;{}&
  \big( \mfrac{i}{2} \big)^{\sfd-j}
  \sum_{r = 0}^{\sfd-j}
  \mbinom{\sfd-j}{r}
  (j+r)!
  y^{-r}
  \frake_{j+r,\sfd-j-r}
\tx{.}
\end{align*}
\end{proof}

We are now in position, to express~$\Evec_2$ via vector-valued Eisenstein series.
\begin{proposition}
We have
\begin{gather}
\label{eq:Evec2_eisenstein_expression}
  \Evec_2(\tau)
\;=\;
  E_{1,1}(\tau,1,0)
  \,+\,
  2 i\, \Res_{s=1}\,E_{1,1}(\tau,0,s)
\tx{.}
\end{gather}
\end{proposition}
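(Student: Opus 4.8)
The plan is to evaluate both Eisenstein-series terms on the right-hand side of \eqref{eq:Evec2_eisenstein_expression} by reducing them, via Proposition~\ref{prop:vector_valued_real_analytic_eisenstein_series_as_product}, to the product Eisenstein series $E^\ahol_{1,1}$ of Proposition~\ref{prop:vector_valued_eisenstein_series_product}, and then to match the outcome against the expression for $\Evec_2$ in \eqref{eq:E2vec_LR_basis}. Throughout, the identities of Propositions~\ref{prop:vector_valued_eisenstein_series_product} and~\ref{prop:vector_valued_real_analytic_eisenstein_series_as_product}, although initially valid only for $2\Re(s)+1 > 3$, extend to the relevant values of~$s$ by meromorphic continuation.

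First I would handle $E_{1,1}(\tau,1,0)$. Specializing Proposition~\ref{prop:vector_valued_real_analytic_eisenstein_series_as_product} to $k=\sfd=1$ and $j=1$ collapses the sum to a single term, since $\sfd-j=0$, giving $E_{1,1}(\tau,1,s) = E^\ahol_{1,1}(\tau,1,s)$. Proposition~\ref{prop:vector_valued_eisenstein_series_product} then identifies this with $\frake_{1,0}\,E_2(\tau,s)$. Since $E_2(\tau,s)$ is regular at $s=0$ with $E_2(\tau,0)=E_2(\tau)$, as recorded in Section~\ref{ssec:real_analytic_eisenstein_series}, I obtain $E_{1,1}(\tau,1,0) = \frake_{1,0}\,E_2(\tau)$, which is precisely the first summand in \eqref{eq:E2vec_LR_basis}.

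Next I would treat the residue term. Specializing Proposition~\ref{prop:vector_valued_real_analytic_eisenstein_series_as_product} to $k=\sfd=1$ and $j=0$ yields
\[
  E_{1,1}(\tau,0,s)
  = \tfrac{i}{2}\big( E^\ahol_{1,1}(\tau,0,s) + E^\ahol_{1,1}(\tau,1,s-1) \big),
\]
and Proposition~\ref{prop:vector_valued_eisenstein_series_product} rewrites the two terms as $\frake_{0,1}\,E_0(\tau,s)$ and $\frake_{1,0}\,E_2(\tau,s-1)$, respectively. Taking the residue at $s=1$, the second contribution vanishes because $E_2(\tau,s-1)$ is regular there, while the first contributes the classical residue $\Res_{s=1}E_0(\tau,s)=3/\pi$. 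Hence $\Res_{s=1}E_{1,1}(\tau,0,s) = \frac{3i}{2\pi}\frake_{0,1}$, and therefore $2i\,\Res_{s=1}E_{1,1}(\tau,0,s) = -\frac{3}{\pi}\frake_{0,1}$, matching the second summand in \eqref{eq:E2vec_LR_basis}. Adding the two contributions reproduces \eqref{eq:E2vec_LR_basis} verbatim, which proves the claim.

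The main obstacle is the residue computation $\Res_{s=1}E_0(\tau,s)=3/\pi$. While this is the standard residue of the weight-zero real-analytic Eisenstein series, if one prefers to stay self-contained it can be extracted from the Fourier expansion \eqref{eq:classical_eisenstein_fourier_expansion}: for $k=0$ the only pole at $s=1$ arises from $\zeta(2s-1)$ in the constant term, and a short computation using $\zeta(2s-1)\sim\frac{1}{2(s-1)}$ together with $\zeta(2)=\pi^2/6$ yields the constant $3/\pi$. One must also keep track of the meromorphic continuation so that the single simple pole of $E_{1,1}(\tau,0,s)$ at $s=1$, already guaranteed by Proposition~\ref{prop:vector_valued_real_analytic_eisenstein_series_as_product}, is correctly located in the $\frake_{0,1}$-component.
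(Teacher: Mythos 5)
Your proof is correct and takes essentially the same route as the paper's own: both reduce $E_{1,1}(\tau,1,0)$ and $E_{1,1}(\tau,0,s)$ to the products $\frake_{j,1-j}\cdot E_{2j}$ via Propositions~\ref{prop:vector_valued_eisenstein_series_product} and~\ref{prop:vector_valued_real_analytic_eisenstein_series_as_product}, use $\Res_{s=1}E_0(\tau,s)=\frac{3}{\pi}$ (the paper cites Miyake, you extract it from~\eqref{eq:classical_eisenstein_fourier_expansion}, an alternative the paper itself mentions parenthetically), and match the result against~\eqref{eq:E2vec_LR_basis}. Incidentally, your plus sign in front of the $\frake_{1,0}E_2(\tau,s-1)$ term in the decomposition of $E_{1,1}(\tau,0,s)$ is the correct one, whereas the paper's proof writes a minus there; this is an inconsequential slip on the paper's side, since that term is regular at $s=1$ and drops out of the residue either way.
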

\begin{proof}
Note that Proposition~\ref{prop:vector_valued_real_analytic_eisenstein_series_as_product} immediately implies that
\begin{gather*}
  E_{1,1}(\tau,1,0)	
\;=\;
  \frake_{1,0}(\tau)
  E_2(\tau)
\tx{.}
\end{gather*}

We also obtain the expression
\begin{gather*}
  E_{1,1}(\tau,0,s)
=
  \mfrac{i}{2}
  \big(
  - \frake_{1,0} E_2(\tau,s-1)
  + \frake_{0,1} E_0(\tau,s)
  \big)
\tx{,}
\end{gather*}
which reveals a simple pole at~$s = 1$. Recall from Miyake's Corollary~7.2.10 in~\cite{miyake-1989} (or employ~\eqref{eq:classical_eisenstein_fourier_expansion}) that
\begin{gather*}
  \Res_{s = 1}\, E_0(\tau,s)
=
  \mfrac{\pi}{2 \zeta(2)}
=
  \mfrac{3}{\pi}
\tx{,}
\end{gather*}
where care must be taken that he defines modified real-analytic Eisenstein series in~(7.2.1), which lack the factor~$y^s$ and are normalized by an additional factor~$\zeta(2s+k)$. This yields
\begin{gather*}
  \Res_{s = 1}\, E_{1,1}(\tau,0,s)
\;=\;
  \mfrac{i}{2}
  \frake_{0,1}\,
  \Res_{s = 1} E_0(\tau,s)
=
  \mfrac{3 i}{2 \pi}
  \frake_{0,1}
\tx{.}
\end{gather*}

Combining this with~\eqref{eq:E2vec_LR_basis}, we conclude that 
\begin{gather*}
  \Evec_2(\tau)
\;=\;
  E_2(\tau) \frake_{1,0}(\tau)
  -
  \mfrac{3}{\pi} \frake_{0,1}(\tau)
\;=\;
  E_{1,1}(\tau,1,0)
  \,+\,
  2 i\, \Res_{s=1}\,E_{1,1}(\tau,0,s)
\tx{.}
\end{gather*}
\end{proof}

\section{Automorphic forms}%
\label{sec:automorphic_forms}

In this section, we revisit some of theory of automorphic forms, principal series, and automorphic Eisenstein series. We base this section on recent work by Roy--Schmidt--Yi~\cite{roy-schmidt-yi-2021-preprint}, who examined the automorphic representation associated with~$E_2$. The content of this section is mostly preparatory for Section~\ref{sec:vector_valued_automorphic_forms}. The only statement not contained in the literature is given in Section~\ref{ssec:modular_automorphic_eisenstein_series_wt2}.

\subsection{Preliminaries}%
\label{ssec:automorphic_forms:preliminaries}

We write~$\QQ_p$ and~$\ZZ_p$ for the rings of $p$-adic rationals and integers.  We write~$\bbA$ for the adeles of~$\QQ$, and $\bbAf \subset \bbA$ for the finite adeles. To shorten notation and make it slightly more compatible with common use in automorphic representation theory, we will write~$G$ for the algebraic group~$\SL{2}$, $P$ for its parabolic subgroup of upper triangular matrices, $K_\infty = \SO{2}(\RR)$, $K_p = G(\QQ_p)$ for primes~$p$, $K_\rmf = \prod_p K_p$, and~$K = K_\rmf \times K_\infty$. We write~$\frakg$ for the complexified Lie algebra of~$G(\RR)$ and~$\frakz$ for the center of the universal enveloping algebra~$\frakU(\frakg)$ of~$\frakg$.

To emphasize the connection between modular and automorphic forms, we usually employ the following notation
\begin{gather}
\label{eq:parabolic_element_coordinates}
  p
\;=\;
  \begin{psmatrix} \td{y} & x \td{y}^{-1} \\ 0 & \td{y}^{-1} \end{psmatrix}
\in
  P(\QQ_v)
\tx{,}
\end{gather}
where at the infinite place we assume that~$\td{y} > 0$ and we write~$p(\tau)$ with~$\tau = x + i \td{y}^2$, if we need to emphasize the coordinates.

We recall the definition of automorphic forms, adopted in a slightly different form from~\cite{borel-jacquet-1979}. An automorphic form on~$G$ is a function~$\td{f} : G(\bbA) \ra \CC$ such that~$\td{f}(\ga g) = \td{f}(g)$ for all~$\ga \in G(\QQ)$, $g \in G(\bbA)$, $\td{f}$ is~$K$-finite for the action by right shifts, $\td{f}$ is $\frakz$-finite, and for every norm~$\|\,\cdot\,\|$ on~$G(\bbA)$ in the sense of~\cite{borel-jacquet-1979}, there is~$a > 0$ such that~$|\td{f}(g)| \ll \|g\|^a$ for all~$g \in G(\bbA)$. The space of automorphic forms~$\cA(G(\QQ) \backslash G(\bbA))$ is a module for~$(\frakg,K_\infty) \times G(\bbAf)$. By construction this is a smooth representation. Automorphic representations are defined as constituents of~$\cA(G(\QQ) \backslash G(\bbA))$.

We write~$\cD(k-1)$, $k \ge 1$ an integer, for the (limit of) holomorphic discrete series of Harish-Chandra parameter~$k-1$. Its lowest weight has index~$k - 1 + \frac{1+1}{2} = k$. Similarly, $\ov{\cD}(k-1)$ denotes the (limit of) anti-holomorphic discrete series. The Steinberg representation for~$\SL{2}(\QQ_v)$ will written as~$\mathrm{St}_v$. We write the trivial representation of~$G(\QQ_v)$ as~$\bbone_v$.

\paragraph{Modular forms}

We next recall the procedure by which one associates an automorphic form~$\td{f}$ to an almost holomorphic modular form~$f$ of weight~$k$, or more generally a real-analytic one that vanishes under a polynomial in the Laplace operator. We define
\begin{gather}
  \td{f}(\ga g_\infty k_\rmf)
\;:=\;
  \big(f \big|_k g_\infty \big)(i)
\tx{,}\quad
  \ga \in G(\QQ),\,
  g_\infty \in G(\RR),\,
  k_\rmf \in K_\rmf
\tx{.}
\end{gather}
Since every~$g \in G(\bbA)$ can be decomposed as a product~$\ga g_\infty k_\rmf$, this provides all values of~$\td{f}$. Observe that~$\td{f}$ depends on the weight~$k$, which is only implicitly given via~$f$.

While~$\td{f}$ is a~$K_\rmf$\nbd fixed vector by construction, its~$K_\infty$\nbd type is readily calculated. For all~$k = \begin{psmatrix} d & -c \\ c & d \end{psmatrix} \in K_\infty$, we have
\begin{gather*}
  \td{f}(g k_\infty)
\;:=\;
  \big(f \big|_k g k_\infty \big)(i)
\;:=\;
  (c i + d)^{-k}
  \big(f \big|_k g \big)(k_\infty i)
\;:=\;
  \td{f}(g)\,
  (c i + d)^{-k}
\tx{.}
\end{gather*}
We write~$\varpi(f)$ for the representation in~$\cA(G(\QQ) \backslash G(\bbA))$ generated by~$\td{f}$.

\subsection{Principle series}

Langlands proved in the appendix to work by Borel--Jacquet~\cite{borel-jacquet-1979} that automorphic representations are exactly the constituents of global principal series, which we define in this section. Specifically, we recall the normalized induction from~$P$ to~$G$, for which we will need the modular character~$\delta_P$ of~$P(\bbA)$.

Given a place~$v$ of~$\QQ$, let~$\mu_{P,v}$ be the normalized right Haar measure on~$P(\QQ_v)$. Then for~$p \in P(\QQ_v)$, we define~$\delta_{P,v}(p)$ by
\begin{gather*}
  \delta_{P,v}(p)\, \mu_{P,v}
\;:=\;
  p^\ast \big( \mu_{P,v} \big)
\tx{,}
\end{gather*}
where~$p^\ast$ denotes the pullback under inverse multiplication from the left. More concretely, for~$p$ as in~\eqref{eq:parabolic_element_coordinates}, we have~$\delta_P(p) = |\td{y}|^2_v$. At the finite places~$v$, we can verify this by comparing the measure of~$P(\ZZ_v)$ and its image under~$p^{-1}$:
\begin{gather*}
  \mu\big( p^{-1} P(\ZZ_v) \big)
\;=\;
  \mu\Bigg( \Bigg\{
  \begin{pmatrix}
  \td{y} a & \td{y} b + x \td{y}^{-1} a^{-1} \\
  0 & \td{y}^{-1} a^{-1}
  \end{pmatrix}
  \,:\,
  a \in \ZZ_p^\times,\, b \in \ZZ_p
  \Bigg\} \Bigg)
\;=\;
  |\td{y}|_v\, |\td{y}|_v
\;=\;
  |y|_v
\tx{.}
\end{gather*}
At the infinite place, a similar computation with, say, $a \in [\frac{1}{2}, 2]$ and~$b \in [-1,1]$ works. The global modular function~$\delta_P$, defined in an analogous way, equals the product of its local contributions~$\delta_{P,v}$ over all places of~$\QQ$.

Note that~$G(\QQ_p)$ is unimodular, i.e., its modular character is trivial. Local normalized induction of a representation~$\sigma$ of~$P(\QQ_p)$ can then be defined by the action via right shifts on the space
\begin{gather}
  V\big( \Ind_{P(\QQ_v)}^{G(\QQ_v)}\,\sigma \big)
\;:=\;
  \big\{
  f :\, G(\QQ_v) \ra V(\sigma) \,:\,
  f \tx{\ smooth},\,
  f(p g) = \delta_{P,v}(p)^{\frac{1}{2}} \sigma(p) f(g)
  \big\}
\tx{.}
\end{gather}
The representation~$\sigma$ in this work is merely a character on the Levi factor. In order to accommodate the relation to~$\PGL{2}$, we write~$|\,\cdot\,|_p^s \times |\,\cdot\,|_p^{-s}$ for the character that maps~$p$ as in~\eqref{eq:parabolic_element_coordinates} to~$|y|^s = |\td{y}|^{2s}$. We write~$\Pi_{s,v}$ for its induction if~$v \ne \infty$ and~$\Pi_{1 \slash 2, \infty}$ for the~$K_\infty$-finite vectors of the induction at~$v = \infty$.

The important case for us will be~$s = \frac{1}{2}$. If~$v = \infty$, the principal series fits into the following exact sequence of Harish-Chandra modules:
\begin{gather}
  \ov{\cD}(1) \oplus \cD(1)
\lhra
  \Pi_{\frac{1}{2},\infty}
\lthra
  \sym^1
\tx{.}
\end{gather}
If~$v$ is finite, we have an analogous sequence of~$G(\QQ_v)$-modules
\begin{gather}
  \mathrm{St}_v
\lhra
  \Pi_{\frac{1}{2},v}
\lthra
  \bbone_v
\tx{.}
\end{gather}

\subsection{The Eisenstein series of weight $2$}%
\label{ssec:modular_automorphic_eisenstein_series_wt2}

Roy--Schmidt--Yi~\cite{roy-schmidt-yi-2021-preprint} determined the exact structure of~$\varpi(E_2)$ in the context of automorphic forms for~$\GL{2}$ in their Theorem~5.11. Since the restriction of the Steinberg representation from~$\GL{2}(\QQ_v)$ to~$\SL{2}(\QQ_v)$ does not decompose further, we can adopt their result and find that~$\varpi(E_2)$ sits in the short exact sequence
\begin{gather}
\label{eq:automorphic_form_E2_short_exact_sequence}
  \bbone
\lhra
  \varpi(E_2)
\lthra
  \cD(1)
  \,\otimes\,
  \sideset{}{'}{\bigotimes}_{v \ne \infty}\,
  \Pi_{\frac{1}{2},v}
\tx{.}
\end{gather}
Note that the discrete series~$\cD(1)$ in this sequence is the one for~$\SL{2}(\RR)$ and thus different from the one in~\cite{roy-schmidt-yi-2021-preprint} for~$\GL{2}(\RR)$.

The maximal irreducible quotient~$\ov\varpi(E_2)$ of~$\varpi(E_2)$ can be reasonably viewed as the automorphic representation associated with~$\Ehol_2$. We want to verify that~$\ov\varpi(E_2)$ is not a subrepresentation of~$\cA(G(\QQ) \backslash G(\bbA))$. While this is folklore, to our knowledge it is not stated in the required form in the available literature.

\begin{proposition}
\label{prop:E2_automorphic_embedding}
There is \emph{no} intertwining inclusion of\/~$\ov\varpi(E_2)$ into~$\cA(G(\QQ) \backslash G(\bbA))$.
\end{proposition}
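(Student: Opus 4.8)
The plan is first to identify~$\ov\varpi(E_2)$ explicitly, and then to derive a contradiction from the vanishing of holomorphic modular forms of weight~$2$ and level~$1$. From the short exact sequence~\eqref{eq:automorphic_form_E2_short_exact_sequence}, the maximal irreducible quotient (cosocle) of~$\varpi(E_2)$ coincides with that of~$\cD(1) \otimes \sideset{}{'}{\bigotimes}_{v \ne \infty} \Pi_{\frac{1}{2},v}$, since the nonsplit submodule~$\bbone$ is not itself a quotient and therefore contributes no irreducible quotient. Using the local sequences~$\mathrm{St}_v \lhra \Pi_{\frac{1}{2},v} \lthra \bbone_v$, the cosocle at each finite place is~$\bbone_v$, while~$\cD(1)$ is already irreducible, so the cosocle is the tensor product of the local cosocles. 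Hence
\begin{gather*}
  \ov\varpi(E_2)
\;\cong\;
  \cD(1)
  \,\otimes\,
  \sideset{}{'}{\bigotimes}_{v \ne \infty}\,
  \bbone_v
\tx{.}
\end{gather*}

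Next I would assume, for contradiction, that there is an intertwining inclusion~$\ov\varpi(E_2) \lhra \cA(G(\QQ) \backslash G(\bbA))$. Because the finite part is trivial, the image consists of~$K_\rmf$-fixed automorphic forms, that is, forms of level~$1$. The factor~$\cD(1)$ is the holomorphic discrete series whose lowest weight vector has~$K_\infty$-index~$2$. Reversing the dictionary of Section~\ref{ssec:automorphic_forms:preliminaries}, the image~$\td f$ of this lowest weight vector is an automorphic form that is~$K_\rmf$-fixed, transforms under~$K_\infty$ with weight~$2$, and is annihilated by the lowering operator~$\rmL_2$ of~\eqref{eq:def:maass_operators}. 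The associated classical function~$f$ on~$\HS$ is therefore holomorphic, invariant under~$\SL{2}(\ZZ)$ by the left~$G(\QQ)$-invariance of~$\td f$, and of moderate growth because automorphic forms satisfy the corresponding polynomial bound. In other words, $f \in \rmM_2$.

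The contradiction is then immediate: the embedding is injective and the lowest weight vector is a nonzero vector of the irreducible module~$\ov\varpi(E_2)$, so~$f$ is nonzero, whereas~$\rmM_2 = 0$. Thus no such inclusion exists.

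The step I expect to carry the real content—and hence the place to be careful—is the precise identification of the cosocle together with the verification that its minimal~$K_\infty$-type is the \emph{holomorphic} weight~$2$ rather than an anti-holomorphic or higher type; the translation of the automorphic conditions into conditions~(i)--(iii) of the definition of holomorphic modular forms is then standard, with moderate growth supplying condition~(iii). Conceptually, the argument records the folklore fact that the failure of~$\Ehol_2$ to be modular is exactly the obstruction to realizing~$\ov\varpi(E_2)$ as a submodule of the space of automorphic forms.
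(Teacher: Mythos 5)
Your proof is correct, and its core is the same as the paper's: assume the inclusion exists, pass from the spherical lowest-weight vector to a classical function on~$\HS$ that is holomorphic, $\SL{2}(\ZZ)$\nbd invariant of weight~$2$, and of moderate growth. The two arguments part ways only at the contradiction. You invoke~$\rmM_2 = 0$ together with injectivity of the inclusion, which is the more elementary ending; the paper instead observes that the finite components of the chosen vector carry the Hecke eigenvalues~$1+p$ of~$E_2$, so the resulting classical form would have to be a multiple of~$\Ehol_2$ up to an additive constant, which is not modular. The paper's Hecke ending has one structural advantage: it makes your preliminary cosocle computation unnecessary. The paper simply takes the image of~$\td{E}_2$ in~$\ov\varpi(E_2)$: it is spherical with the right eigenvalues, has $K_\infty$\nbd weight~$2$, and is annihilated by~$\rmL_2$ in the quotient, because~$\rmL_2\,\td{E}_2$ spans the copy of~$\bbone$, which by nonsplitness lies in the kernel of any projection onto an irreducible quotient. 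By contrast, your identification of~$\ov\varpi(E_2)$ with~$\cD(1)$ tensored with the trivial representations at the finite places, while correct and a nice clarification, rests on the assertion that the cosocle of an infinite restricted tensor product is the restricted tensor product of the local cosocles; making that rigorous requires admissibility and Flath's factorization theorem (or the shortcut via~$\td{E}_2$ just described, which avoids it entirely). This is a gap in rigor comparable to the paper's own unproved assertion that~$\ov\varpi(E_2)$ contains a factorizable vector of the stated type, so it does not invalidate your argument, but it is the one step you should either justify or circumvent.
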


\begin{proof}
Assuming the contrary, we have a map
\begin{gather}
\label{eq:E2_automorphic_embedding}
  \ov\varpi(E_2)
\lhra
  \cA(G(\QQ) \backslash G(\bbA))
\tx{.}
\end{gather}
Let~$\phi = \bigotimes_v \phi_v \in \ov\varpi(E_2)$ be the spherical vector at all finite places~$v$ and a generator of the lowest~$K_\infty$\nbd type at the infinite place. We write~$\td{f}$ for the image of~$\phi$ under~\eqref{eq:E2_automorphic_embedding} and define a function~$f : \HS \ra \CC$ by
\begin{gather*}
  \big( f \big|_2\, g_\infty \big)(i)
\;:=\;
  \td{f}(g_\infty)
\tx{,}\quad
  g_\infty \in G(\RR)
\tx{.}
\end{gather*}
This is well-defined since~$\td{f}$ has~$K_\infty$-type corresponding to modular weight~$2$. It is invariant under~$\Ga$, since~$\td{f}$ is automorphic and spherical at the finite places. Since~$\phi_\infty$ is a lowest weight vector, $f$ is holomorphic. At the finite places~$\phi_v$ has the same Hecke eigenvalue~$1 + p$ as~$E_2$. The growth condition for~$\td{f}$ implies that~$f$ has moderate growth. From the Hecke eigenvalues we conclude that~$f$ is a multiple of~$\Ehol_2$ up to an additive constant, which is not modular; A contradiction.
\end{proof}

\section{Vector-valued automorphic forms}%
\label{sec:vector_valued_automorphic_forms}

In this section we define the space~$\cA(G(\QQ) \backslash G(\bbA), \rho)$ of vector-valued automorphic forms. We introduce finite dimensional, vector-valued automorphic representations in Section~\ref{ssec:finite_vector_valued_automorphic_representation}.
Then in Section~\ref{ssec:automorphic_tensor_products} we relate them to the tensor product functors studied by Bernstein--Gelfand. This allows us to reinterpret the vector-valued automorphic representation~$\varpi(\Evec_2)$ generated by~$\Evec_2$. The tensor product functors partially split the extension class of~$\varpi(E_2)$.

\subsection{Preliminaries}

Recall the notation set up in Section~\ref{ssec:automorphic_forms:preliminaries}. We start by defining vector-valued automorphic forms following Bringmann--Kudla~\cite{bringmann-kudla-2018}. We translate their notion to the adelic setting and amend the condition that vector-val\-ued automorphic forms are~$\frakz$-finite, which appears in the definition of usual automorphic forms.

We let~$\rho$ be an arithmetic type, that is, a representation of~$\SL{2}(\ZZ) = G(\ZZ)$, that extends to~$G(\QQ)$. For~$\ga \in G(\QQ)$ embedded diagonally into~$G(\bbA)$, we set~$\rho(\ga) = \rho(\ga_\infty)$, where~$\ga = \ga_\rmf \ga_\infty$ with~$\ga_\rmf \in G(\bbAf)$ and~$\ga_\infty \in G(\RR)$. A vector-valued automorphic form on~$G$ of arithmetic type~$\rho$ is a function~$\td{f} : G(\bbA) \ra V(\rho)$ such that~$\td{f}(\ga g) = \rho(\ga) \td{f}(g)$ for all~$\ga \in G(\QQ)$, $g \in G(\bbA)$, $\td{f}$ is~$K$-finite for the action by right shifts, $\td{f}$ is $\frakz$-finite, and for every norm~$\|\,\cdot\,\|$ on~$G(\bbA)$ and every norm~$\|\,\cdot\,\|$ on~$V(\rho)$ there is~$a > 0$ such that~$\|\td{f}(g)\| \ll \|g\|^a$ for all~$g \in G(\bbA)$. The corresponding space of vector-valued automorphic forms~$\cA(G(\QQ) \backslash G(\bbA), \rho)$ is a module for~$(\frakg,K_\infty) \times G(\bbAf)$. As in the automorphic case, it is a smooth representation by construction.

\paragraph{Modular forms}

Similar to classical modular forms, one can pass from vector-valued modular forms to vector-valued automorphic forms. In the real-analytic case we need to assume that it vanishes under a polynomial in the Laplace operator. Given a vector-valued modular form~$f$ of weight~$k$ and type~$\rho$ where~$\rho$ extends to~$\SL{2}(\QQ)$ we can associate a vector-valued automorphic form as follows:
\begin{gather}
  \td{f}(\ga g_\infty k_\rmf)
\;:=\;
  \rho(\ga)\,
  \big(f \big|_k\, g_\infty \big)(i)
\tx{,}\quad
  \ga \in G(\QQ),\,
  g_\infty \in G(\RR),\,
  k_\rmf \in K_\rmf
\tx{.}
\end{gather}
The defining properties of vector-valued modular forms can all be directly verified provided that the right hand side is well-defined. For clarity, we emphasize that the right hand side does not agree with the vector-valued slash action that appears in the definition of vector-valued modular forms.

To verify that the right hand side is well-defined, we consider~$\ga \in G(\QQ) \cap G(\RR) K_\rmf$ and write~$\ga = \ga_\rmf \ga_\infty$ with~$\ga_\rmf \in K_\rmf$ and~$\ga_\infty \in G(\RR)$. Then for~$\ga' \in G(\QQ)$, $g_\infty \in G(\RR)$, and~$k_\rmf \in K_\rmf$, we have
\begin{align*}
  \td{f}\big( \ga' g_\infty k_\rmf \big)
&{}=
  \td{f}\big( \ga' \ga^{-1}\, \ga_\infty g_\infty\, \ga_\rmf k_\rmf \big)
=
  \rho\big( \ga' \ga^{-1} \big)\,
  \big( f \big|_k\, \ga_\infty g_\infty \big)(i)
\\
&{}=
  \rho\big( \ga' \ga^{-1} \big)\,
  \rho(\ga_\infty)\,
  \big(  f \big|_k\, g_\infty \big)(i)
=
  \rho\big( \ga' \big)\,
  \big(  f \big|_k\, g_\infty \big)(i)
\tx{.}
\end{align*}

We write~$\varpi(f)$ for the representation in~$\cA(G(\QQ) \slash G(\bbA), \rho)$ generated by~$\td{f}$.

\subsection{Finite dimensional representations}%
\label{ssec:finite_vector_valued_automorphic_representation}

Recall~$\frake_{j,\sfd-j}$ from Section~\ref{eq:def:symd_LR_basis}, which is a vector-valued modular form of weight~$\sfd-2j$ and type~$\sym^\sfd$. The sole purpose of this section is to determine the associated vector-valued automorphic representations.

\begin{theorem}%
\label{thm:finite_vector_valued_automorphic_representation}
Given a nonnegative integer~$\sfd$, we have~$\varpi(\frake_{j,\sfd-j}) = \varpi(\frake_{\sfd,0})$ for every integer~$0 \le j \le \sfd$. Further, we have the restricted tensor product decomposition
\begin{gather}
\label{eq:finite_vector_valued_automorphic_representation_tensor_decomposition}
  \varpi(\frake_{\sfd,0})
\;\cong\;
  \sym^\sfd \,\otimes\, \bigotimes_{v \ne \infty} \bbone_v
\tx{,}
\end{gather}
where~$\sym^\sfd$ is the~$\sfd+1$-dimensional irreducible~$(\frakg,K_\infty)$-module and~$\bbone_v$ is the trivial representation of\/~$G(\QQ_v)$.
\end{theorem}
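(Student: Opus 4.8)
The plan is to show that for every~$j$ the automorphic form~$\td\frake_{j,\sfd-j}$ attached to~$\frake_{j,\sfd-j}$ has the simple shape~$g \mapsto \sym^\sfd(g_\infty)\,\frake_{j,\sfd-j}(i)$, and that the~$(\frakg,K_\infty)\times G(\bbAf)$-module it generates is one fixed copy of~$\sym^\sfd$ placed at the infinite place, with the trivial representation at every finite place. First I would translate the strengthened modularity~\eqref{eq:symd_LR_basis_SL2Rmodular} into the adelic picture. By the definition of the vector-valued slash action one has~$\frake_{j,\sfd-j}\big|_{\sfd-2j,\sfd}g = \sym^\sfd(g)^{-1}\big(\frake_{j,\sfd-j}\big|_{\sfd-2j}g\big)$, so~\eqref{eq:symd_LR_basis_SL2Rmodular} is equivalent to~$\frake_{j,\sfd-j}\big|_{\sfd-2j}g = \sym^\sfd(g)\,\frake_{j,\sfd-j}$ for all~$g \in \SL{2}(\RR)$. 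Evaluating this at~$\tau = i$ and inserting it into the construction~$\td\frake_{j,\sfd-j}(\ga g_\infty k_\rmf) = \sym^\sfd(\ga)\big(\frake_{j,\sfd-j}\big|_{\sfd-2j}g_\infty\big)(i)$ yields~$\td\frake_{j,\sfd-j}(g) = \sym^\sfd(g_\infty)\,v_j$, where~$g_\infty$ is the archimedean component of~$g$ and~$v_j := \frake_{j,\sfd-j}(i)$ is a nonzero vector in~$\CC[X]_\sfd = V(\sym^\sfd)$. Here I would invoke strong approximation, $G(\bbA) = G(\QQ)\,\big(G(\RR)\times K_\rmf\big)$, to know this determines~$\td\frake_{j,\sfd-j}$ on all of~$G(\bbA)$.

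Next I would package this uniformly. For~$w \in W := V(\sym^\sfd)$ set~$\td f_w(g) := \sym^\sfd(g_\infty)\,w$; this is manifestly well-defined on~$G(\bbA)$, is left~$G(\QQ)$-equivariant of type~$\sym^\sfd$ since~$(\ga g)_\infty = \ga_\infty g_\infty$, and is right~$K_\rmf$-invariant because elements of~$K_\rmf$ are trivial at the infinite place. The family~$\{\td f_w : w \in W\}$ is a linear copy of~$W$ via~$w = \td f_w(1)$, and each~$\td f_w$ is a~$(\frakg,K_\infty)$-translate of~$\td\frake_{\sfd,0} = \td f_{v_\sfd}$, hence a genuine vector-valued automorphic form; denote this submodule by~$\Psi(W) \subset \cA\big(G(\QQ)\backslash G(\bbA),\sym^\sfd\big)$. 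A direct computation of the two actions then settles the structure: for~$g_\rmf' \in G(\bbAf)$ one has~$(g g_\rmf')_\infty = g_\infty$, so~$\td f_w(g g_\rmf') = \td f_w(g)$ and~$G(\bbAf)$ acts trivially; while for the archimedean factor~$\td f_w(g\,\kappa) = \sym^\sfd(g_\infty)\sym^\sfd(\kappa)\,w = \td f_{\sym^\sfd(\kappa)w}(g)$, so the~$(\frakg,K_\infty)$-action on~$\Psi(W)$ is exactly~$\sym^\sfd$. Thus~$\Psi(W) \cong \sym^\sfd \otimes \bigotimes_{v\ne\infty}\bbone_v$.

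Finally I would identify~$\varpi(\frake_{j,\sfd-j})$ with~$\Psi(W)$ for every~$j$. On the one hand~$\td\frake_{j,\sfd-j} = \td f_{v_j} \in \Psi(W)$ and~$\Psi(W)$ is a~$(\frakg,K_\infty)\times G(\bbAf)$-submodule, so~$\varpi(\frake_{j,\sfd-j}) \subseteq \Psi(W)$. On the other hand~$\Psi(W) \cong \sym^\sfd$ is irreducible as a~$(\frakg,K_\infty)$-module and~$v_j \ne 0$, so the~$(\frakg,K_\infty)$-submodule generated by~$\td f_{v_j}$ is already all of~$\Psi(W)$; hence~$\varpi(\frake_{j,\sfd-j}) = \Psi(W)$. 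As the right-hand side does not depend on~$j$, this gives simultaneously~$\varpi(\frake_{j,\sfd-j}) = \varpi(\frake_{\sfd,0})$ and the decomposition~\eqref{eq:finite_vector_valued_automorphic_representation_tensor_decomposition}. I expect the main obstacle to be the first step: keeping the slash-action conventions exactly straight, so that the \emph{full}~$\SL{2}(\RR)$-invariance of~$\frake_{j,\sfd-j}$, rather than mere~$\SL{2}(\ZZ)$-invariance, is correctly converted into the clean formula~$\td\frake_{j,\sfd-j}(g) = \sym^\sfd(g_\infty)\,v_j$, given that the automorphic construction uses the scalar slash~$|_{\sfd-2j}$ whereas the modularity is recorded for the vector slash~$|_{\sfd-2j,\sfd}$.
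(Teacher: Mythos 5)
Your proof is correct and follows essentially the same route as the paper: convert the full $\SL{2}(\RR)$-invariance \eqref{eq:symd_LR_basis_SL2Rmodular} into the explicit formula $\td\frake_{j,\sfd-j}(g) = \sym^\sfd(g_\infty)\,\frake_{j,\sfd-j}(i)$, deduce triviality of the $G(\QQ_v)$-action at all finite places, and identify the archimedean module as one copy of $\sym^\sfd$. The only (harmless) difference is bookkeeping: where the paper concludes by noting that the vectors $\frake_{j,\sfd-j}(i)$ form a basis of $\CC[X]_\sfd$, you package the same information as the submodule $\Psi(W)$ and invoke irreducibility of $\sym^\sfd$ to see that each $\td\frake_{j,\sfd-j}$ generates all of it.
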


\begin{remark}
The isomorphism can be concretely realized by the evaluation map that sends~$f \in \varpi(\frake_{j,\sfd-j})$ to~$f(1) \in \CC[X]_\sfd$. To see that it is injective, note that if~$f(1) = 0$ for~$f \in \varpi(\frake_{j,\sfd-j})$, then for all~$g_\infty \in G(\RR)$, we have~$f(g) = \sym^\sfd(g_\infty) f(1) = 0$ by~\eqref{eq:finite_vector_valued_automorphic_representation_values} in the next proof.
\end{remark}

\begin{proof}
Using the~$\SL{2}(\RR)$-covariance in~\eqref{eq:symd_LR_basis_SL2Rmodular}, we find that the vector-valued automorphic form associated to~$\frake_{j,\sfd-j}$ is
\begin{gather}
\label{eq:finite_vector_valued_automorphic_representation_values}
  \td\frake_{j,\sfd-j}\big( \ga g_\infty k_\rmf \big)
\;=\;
  \sym^\sfd\big( \ga_\infty g_\infty \big)\,
  \frake_{j,\sfd-j}(i)
\tx{,}
\end{gather}
where~$\ga = \ga_\infty \ga_\rmf \in G(\QQ)$, $\ga_\infty, g_\infty \in G(\RR)$, $\ga_\rmf \in G(\bbAf)$, and~$k_\rmf \in K_\rmf$. In particular, we discover that~$\td\frake_{j,\sfd-j}(g) = \td\frake_{j,\sfd-j}(g_\infty)$ for~$g = g_\infty g_\rmf \in G(\bbA)$ with~$g_\infty \in G(\RR)$ and~$g_\rmf \in G(\bbAf)$.

Since~$\varpi(\frake_{j,\sfd-j})$ is defined as the representation generated by~$\td{\frake}_{j,\sfd-j}$, we conclude that~$f(g_\infty g_\rmf) \;=\; f(g_\infty)$ for all~$f \in \varpi(\frake_{j,\sfd-j})$. This shows that~$\varpi(\frake_{j,\sfd-j})$ is trivial as a representation of~$G(\QQ_v)$ for all places~$v \ne \infty$ of~$\QQ$.

Since the~$\frake_{j,\sfd-j}(i)$ form a basis of~$\CC[X]_\sfd$ as~$j$ runs through integers~$0 \le j \le \sfd$, we find as we claimed that
\begin{gather*}
  \varpi(\frake_{j,\sfd-j})
=
  \varpi(\frake_{\sfd,0})
=
  \lspan\, \CC \big\{ \td\frake_{j',\sfd-j'} : 0 \le j' \le \sfd \big\}
\quad\tx{and}\quad
  \varpi(\frake_{\sfd,0})
\;\cong\;
  \sym^\sfd
\tx{.}
\end{gather*}
\end{proof}

\subsection{Automorphic tensor products}%
\label{ssec:automorphic_tensor_products}

In general, constituents of tensor products of automorphic forms are not automorphic. In this section, we show that the tensor product with~$\varpi(\frake_{\sfd,0})$ yields maps from automorphic forms to vector-valued automorphic forms. This yields an automorphic version of the tensor product functors by Bernstein--Gelfand. We examine the structure of~$\varpi(\frake_{j,\sfd-j}) \otimes \varpi(E_2)$ and relate it to one of the cases in the classification by Bringmann--Kudla~\cite{bringmann-kudla-2018} and to~$\varpi(\Evec_2)$.

\begin{theorem}
\label{thm:automorphic_tensor_products}
For all nonnegative integers~$\sfd$ and~$0 \le j \le \sfd$, we have an embedding of\/~$G(\bbAf)$-representations
\begin{gather}
\label{eq:thm:automorphic_tensor_products:embedding}
  \cA\big( G(\QQ) \backslash G(\bbA) \big)
\lhra
  \cA\big( G(\QQ) \backslash G(\bbA), \sym^\sfd \big)
\tx{,}\;
  f \lmto \td\frake_{j,\sfd-j} \cdot f
\tx{.}
\end{gather}
In particular, if~$\varpi$ is an automorphic representation that appears as a subrepresentation, i.e., $\varpi \subseteq \cA(G(\QQ) \backslash G(\bbA))$, then
\begin{gather}
\label{eq:thm:automorphic_tensor_products:product}
  \varpi(\frake_{\sfd,0}) \otimes \varpi
\;\cong\;
  \varpi(\frake_{\sfd,0}) \cdot \varpi
\;\subseteq\;
  \cA\big( G(\QQ) \backslash G(\bbA), \sym^\sfd \big)
\tx{.}
\end{gather}

More generally, if~$\varpi$ is an automorphic representation for~$G(\QQ)$, then~$\varpi(\frake_{\sfd,0}) \otimes \varpi$ is vector-valued automorphic of type~$\sym^\sfd$.
\end{theorem}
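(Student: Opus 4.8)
The plan is to first establish the concrete embedding of part one, then bootstrap to a general constituent via exactness of the tensor product functor. First I would check that, for a fixed $0 \le j \le \sfd$, the assignment $f \mapsto \td\frake_{j,\sfd-j} \cdot f$ takes values in $\cA(G(\QQ) \backslash G(\bbA), \sym^\sfd)$. The transformation law is immediate from $\td\frake_{j,\sfd-j}(\ga g) = \sym^\sfd(\ga) \td\frake_{j,\sfd-j}(g)$, which is \eqref{eq:finite_vector_valued_automorphic_representation_values}, together with the $G(\QQ)$-invariance of the scalar automorphic form $f$; $K$-finiteness is inherited because $\td\frake_{j,\sfd-j}$ is $K_\rmf$-fixed and both factors are $K_\infty$-finite; and moderate growth follows since $\sym^\sfd(g_\infty)$ has polynomially bounded matrix coefficients. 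Injectivity holds because $\td\frake_{j,\sfd-j}(g) = \sym^\sfd(g_\infty)\frake_{j,\sfd-j}(i)$ is a nowhere vanishing vector, and $G(\bbAf)$-equivariance is clear since $\td\frake_{j,\sfd-j}$ depends only on the archimedean coordinate. The one condition deserving care is $\frakz$-finiteness: by the Leibniz rule the $\frakU(\frakg)$-span of $\td\frake_{j,\sfd-j} \cdot f$ lies in the image under multiplication of $\varpi(\frake_{\sfd,0}) \otimes \varpi(f)$, and since $\varpi(\frake_{\sfd,0}) \cong \sym^\sfd$ is finite dimensional while $\varpi(f)$ is $\frakz$-finite, that tensor product is itself $\frakz$-finite; this is the standard fact that tensoring a $\frakz$-finite module with a finite dimensional module preserves local finiteness of the $\frakz$-action.

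Next I would promote this to the multiplication map $m \colon \varpi(\frake_{\sfd,0}) \otimes \varpi \ra \cA(G(\QQ) \backslash G(\bbA), \sym^\sfd)$ for a subrepresentation $\varpi \subseteq \cA(G(\QQ) \backslash G(\bbA))$. It is a $(\frakg, K_\infty) \times G(\bbAf)$-homomorphism because $\frakg$ acts by derivations and $\varpi(\frake_{\sfd,0})$ is trivial at the finite places. To see $m$ is injective, expand a general element as $\sum_j \td\frake_{j,\sfd-j} \otimes f_j$ and evaluate its image at a point $g$, obtaining $\sum_j \sym^\sfd(g_\infty)\, \frake_{j,\sfd-j}(i)\, f_j(g)$; since the $\frake_{j,\sfd-j}(i)$ form a basis of $\CC[X]_\sfd$ by Theorem~\ref{thm:finite_vector_valued_automorphic_representation} and $\sym^\sfd(g_\infty)$ is invertible, vanishing forces $f_j(g) = 0$ for all $j$ and $g$. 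This gives $\varpi(\frake_{\sfd,0}) \otimes \varpi \cong \varpi(\frake_{\sfd,0}) \cdot \varpi$ inside $\cA(G(\QQ) \backslash G(\bbA), \sym^\sfd)$.

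For the final, more general statement I would pass to an arbitrary constituent. Write $\varpi \cong A / B$ with $B \subseteq A \subseteq \cA(G(\QQ) \backslash G(\bbA))$ a nested pair of $(\frakg, K_\infty) \times G(\bbAf)$-subrepresentations, and apply the functor $\varpi(\frake_{\sfd,0}) \otimes (\,\cdot\,) = \sym^\sfd \otimes_\CC (\,\cdot\,)$, which is exact since $\sym^\sfd$ is finite dimensional over $\CC$. This yields a short exact sequence with outer terms $\varpi(\frake_{\sfd,0}) \otimes B$ and $\varpi(\frake_{\sfd,0}) \otimes A$ and cokernel $\varpi(\frake_{\sfd,0}) \otimes \varpi$. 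By the previous paragraph applied to $A$ and to $B$, and by naturality of $m$, these outer terms are realized as the nested submodules $\varpi(\frake_{\sfd,0}) \cdot B \subseteq \varpi(\frake_{\sfd,0}) \cdot A$ of $\cA(G(\QQ) \backslash G(\bbA), \sym^\sfd)$. Hence $\varpi(\frake_{\sfd,0}) \otimes \varpi$ is isomorphic to the quotient $(\varpi(\frake_{\sfd,0}) \cdot A) / (\varpi(\frake_{\sfd,0}) \cdot B)$, a subquotient of $\cA(G(\QQ) \backslash G(\bbA), \sym^\sfd)$, and is therefore vector-valued automorphic of type $\sym^\sfd$.

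I expect the main obstacle to be the $\frakz$-finiteness in the first step. It is exactly the point that anchors the construction in the Bernstein--Gelfand formalism, and the reason the $\frakz$-finiteness condition was emphasized in the definition of vector-valued automorphic forms; the remaining growth, $K$-finiteness, and transformation checks are routine, and the passage to a general constituent is then a formal consequence of exactness together with the naturality of $m$.
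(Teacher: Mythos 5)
Your proposal is correct and follows essentially the same route as the paper's proof: verify the definition of vector-valued automorphic forms for $\td\frake_{j,\sfd-j} \cdot f$, deduce the isomorphism \eqref{eq:thm:automorphic_tensor_products:product} from the fact that the $\frake_{j,\sfd-j}$ evaluate to a basis of $\CC[X]_\sfd$, and handle a general constituent $\varpi \cong A/B$ by exactness of the finite-dimensional tensor product functor, realizing $\varpi(\frake_{\sfd,0}) \otimes \varpi$ as a subquotient of $\cA(G(\QQ)\backslash G(\bbA), \sym^\sfd)$. The only difference is one of detail: you spell out the $\frakz$-finiteness check via Kostant's theorem on tensoring with finite-dimensional modules, which the paper leaves implicit in the phrase ``verifying the definition.''
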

\begin{proof}
The embedding in~\eqref{eq:thm:automorphic_tensor_products:embedding} follows when verifying the definition of vector-valued automorphic forms for~$\td\frake_{j,\sfd-j} \cdot f$. For the isomorphism in~\eqref{eq:thm:automorphic_tensor_products:product}, it suffices to note that the~$\td\frake_{j,\sfd-j}(1)$ form a basis of~$\CC[X]_\sfd$.

To establish the final statement of the theorem, choose automorphic representations~$\td\varpi, \td\varpi_0 \subseteq \cA(G(\QQ) \backslash G(\bbA))$ with
\begin{gather*}
  \td\varpi_0
\lhra
  \td\varpi
\lthra
  \varpi
\tx{.}
\end{gather*}
Since~$\varpi(\frake_{\sfd,0})$ is finite dimensional and trivial as a representation of~$G(\bbA_\rmf)$, the functor~$\varpi(\frake_{\sfd,0}) \otimes \,\cdot\,$ is exact. We obtain a short exact sequence
\begin{gather*}
  \varpi(\frake_{\sfd,0}) \otimes \td\varpi_0
\cong
  \varpi(\frake_{\sfd,0}) \cdot \td\varpi_0
\lhra
  \varpi(\frake_{\sfd,0}) \otimes \td\varpi
\cong
  \varpi(\frake_{\sfd,0}) \cdot \td\varpi
\lthra
  \varpi(\frake_{\sfd,0}) \otimes \varpi
\tx{.}
\end{gather*}
Since the first two representation are contained in~$\cA(G(\QQ) \backslash G(\bbA), \sym^\sfd)$, we finish the proof.
\end{proof}

The prime example of Theorem~\ref{thm:automorphic_tensor_products} in this paper is given in the next statement.
\begin{theorem}
\label{thm:automorphic_tensor_products_E2}
We have inclusions of internal direct sums
\begin{gather*}
  \varpi(\frake_{1,0}) \oplus \varpi(\Evec_2)
\;\subset\;
  \varpi(\frake_{1,0}) \cdot \varpi(E_2)
\;\subset\;
  \cA\big( G(\QQ) \backslash G(\bbA), \sym^1 \big)
\end{gather*}
and a short exact sequence
\begin{gather*}
  \varpi(\frake_{1,0}) \oplus \varpi(\Evec_2)
\lhra
  \varpi(\frake_{1,0}) \otimes \varpi(E_2)
\lthra
  \varpi
\end{gather*}
for a vector-valued automorphic representation
\begin{gather*}
  \varpi
\;\cong\;
  \cD(2)
  \,\otimes\,
  \sideset{}{'}{\bigotimes}_{v \ne \infty}\,
  \Pi_{\frac{1}{2},v}
\tx{.}
\end{gather*}
\end{theorem}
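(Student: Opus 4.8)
The plan is to deduce the entire structure by feeding the Roy--Schmidt--Yi sequence~\eqref{eq:automorphic_form_E2_short_exact_sequence} into the exact tensor-product functor of Theorem~\ref{thm:automorphic_tensor_products}, and then to reconcile the resulting subquotient picture with the explicit holomorphic form~$\Evec_2$. First I would tensor~\eqref{eq:automorphic_form_E2_short_exact_sequence} with~$\varpi(\frake_{1,0})$. Since~$\varpi(\frake_{1,0})$ is finite dimensional and trivial at every finite place, the functor~$\varpi(\frake_{1,0}) \otimes \,\cdot\,$ is exact, so I obtain a short exact sequence whose kernel is~$\varpi(\frake_{1,0}) \otimes \bbone \cong \varpi(\frake_{1,0})$ (tensoring with the constants does nothing) and, using~$\bbone_v \otimes \Pi_{\frac{1}{2},v} \cong \Pi_{\frac{1}{2},v}$ at the finite places, whose cokernel is
\begin{gather*}
  \varpi(\frake_{1,0}) \otimes \Big( \cD(1) \otimes \sideset{}{'}{\bigotimes}_{v\ne\infty} \Pi_{\frac{1}{2},v} \Big)
\;\cong\;
  \big( \sym^1 \otimes \cD(1) \big) \otimes \sideset{}{'}{\bigotimes}_{v\ne\infty} \Pi_{\frac{1}{2},v}
\tx{.}
\end{gather*}
This isolates all the representation theory at the infinite place.

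The key computation is the decomposition~$\sym^1 \otimes \cD(1) \cong \cD(0) \oplus \cD(2)$ of~$(\frakg, K_\infty)$-modules. I would read off the~$K_\infty$-types: $\cD(1)$ carries weights~$\{2,4,6,\dots\}$ and~$\sym^1$ carries weights~$\{-1,+1\}$, so the tensor product has weight~$1$ once and every odd weight~$\ge 3$ twice, matching the weights of~$\cD(0)$ (limit of discrete series, lowest weight~$1$) together with those of~$\cD(2)$ (lowest weight~$3$). Since tensoring with~$\sym^1$ shifts the Harish-Chandra parameter~$1$ of~$\cD(1)$ to the two distinct values~$0$ and~$2$, the~$\frakz$-finite module~$\sym^1 \otimes \cD(1)$ splits as the direct sum of its two generalized infinitesimal-character eigenspaces, which the weights identify as~$\cD(0)$ and~$\cD(2)$; this is precisely the block decomposition of Bernstein--Gelfand. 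Hence the cokernel above is isomorphic to~$(\cD(0) \oplus \cD(2)) \otimes \bigotimes'_{v\ne\infty} \Pi_{\frac{1}{2},v}$.

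It remains to show that the~$\cD(0)$-part of this cokernel in fact lifts to a genuine subrepresentation realized by~$\Evec_2$. By~\eqref{eq:E2vec_LR_basis} we have~$\Evec_2 = \frake_{1,0} E_2 - \tfrac{3}{\pi} \frake_{0,1}$; here~$\frake_{0,1}$ generates~$\varpi(\frake_{0,1}) = \varpi(\frake_{1,0})$ by Theorem~\ref{thm:finite_vector_valued_automorphic_representation}, while the constant~$\tfrac{3}{\pi} = \rmL_2 E_2$ lies in the subrepresentation~$\bbone \subset \varpi(E_2)$, so~$\td\Evec_2 \in \varpi(\frake_{1,0}) \cdot \varpi(E_2)$ and therefore~$\varpi(\Evec_2) \subseteq \varpi(\frake_{1,0}) \otimes \varpi(E_2)$. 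As~$\Evec_2$ is holomorphic of weight~$1$, its automorphic form is a lowest weight vector generating~$\cD(0)$ at the infinite place; and as~$\td\Evec_2$ has level one it is spherical at every finite place, so—using that~$\Pi_{\frac{1}{2},v}$ is indecomposable and hence generated by its spherical vector—it generates the full~$\bigotimes'_{v\ne\infty}\Pi_{\frac{1}{2},v}$, giving~$\varpi(\Evec_2) \cong \cD(0) \otimes \bigotimes'_{v\ne\infty}\Pi_{\frac{1}{2},v}$. Because~$\cD(0)$ is irreducible and infinite dimensional while~$\varpi(\frake_{1,0})$ is finite dimensional at the infinite place, their intersection vanishes, yielding the internal direct sum~$\varpi(\frake_{1,0}) \oplus \varpi(\Evec_2)$ inside~$\varpi(\frake_{1,0}) \cdot \varpi(E_2) \subseteq \cA(G(\QQ)\backslash G(\bbA), \sym^1)$. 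Finally I would pass to the quotient: the image of~$\varpi(\Evec_2)$ in the cokernel meets the~$\cD(2)$-summand trivially, since there is no nonzero~$(\frakg,K_\infty)$-map~$\cD(0) \to \cD(2)$, so it exhausts the~$\cD(0) \otimes \bigotimes'_{v\ne\infty}\Pi_{\frac{1}{2},v}$-summand, leaving~$\varpi \cong \cD(2) \otimes \bigotimes'_{v\ne\infty}\Pi_{\frac{1}{2},v}$ as the quotient by~$\varpi(\frake_{1,0}) \oplus \varpi(\Evec_2)$.

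The main obstacle is exactly this last reconciliation: the~$\cD(0)$-constituent a priori appears only in the \emph{quotient} of the tensored sequence, and the substance of the theorem is that it partially splits off as an honest submodule. The explicit identity~\eqref{eq:E2vec_LR_basis} is what produces the lift, while the infinitesimal-character separation at the infinite place and the spherical generation of~$\Pi_{\frac{1}{2},v}$ at the finite places are what guarantee that the lifted copy of~$\cD(0) \otimes \bigotimes'_{v\ne\infty}\Pi_{\frac{1}{2},v}$ is disjoint from~$\varpi(\frake_{1,0})$ and removes the~$\cD(0)$-part cleanly, so that the residual quotient is precisely~$\cD(2) \otimes \bigotimes'_{v\ne\infty}\Pi_{\frac{1}{2},v}$ with nothing left over.
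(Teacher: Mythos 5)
Your proposal is correct in substance and, for most of its length, runs parallel to the paper's own proof: both arguments tensor the Roy--Schmidt--Yi sequence~\eqref{eq:automorphic_form_E2_short_exact_sequence} with~$\varpi(\frake_{1,0})$ using the exactness established in Theorem~\ref{thm:automorphic_tensor_products}, both rest on the decomposition $\sym^1 \otimes \cD(1) \cong \cD(0) \oplus \cD(2)$ at the infinite place (the paper leaves this computation implicit in its appeal to Theorem~\ref{thm:finite_vector_valued_automorphic_representation}; your $K_\infty$-type and infinitesimal-character bookkeeping is a welcome expansion), and both use~\eqref{eq:E2vec_LR_basis} together with $\rmL_2 E_2 = \mfrac{3}{\pi}$ to place~$\tdEvec_2$ inside $\varpi(\frake_{1,0}) \cdot \varpi(E_2)$. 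The genuine divergence is in how you identify~$\varpi(\Evec_2)$, i.e.\ in the proof of Corollary~\ref{cor:E2vec_automophic_representation}. The paper computes the weight-one $K_\infty$-isotypic component $\big(\varpi(\frake_{1,0}) \cdot \varpi(E_2)\big)[1]$ as a $K_\infty \times G(\bbAf)$-module from Theorem~\ref{thm:finite_vector_valued_automorphic_representation} and the components $\varpi(E_2)[0] \cong \bbone$ and $\varpi(E_2)[2] \cong \pi_2 \otimes \sideset{}{'}{\bigotimes}_{v \ne \infty} \Pi_{\frac{1}{2},v}$, and then quotients by $\wtd\frake_{0,1} \cdot \varpi_\rmf(1)$; everything it needs is already assembled inside the paper. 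You instead map~$\tdEvec_2$ into the cokernel of the tensored sequence and argue that its image is a spherical lowest-weight vector generating the full summand $\cD(0) \otimes \sideset{}{'}{\bigotimes}_{v \ne \infty} \Pi_{\frac{1}{2},v}$; this imports the standard local fact that the spherical vector generates the indecomposable~$\Pi_{\frac{1}{2},v}$ (any subrepresentation avoiding~$\mathrm{St}_v$ would split the non-split extension $\mathrm{St}_v \hra \Pi_{\frac{1}{2},v} \thra \bbone_v$). Your route is shorter and more conceptual; the paper's is more self-contained, staying within module structures it has already computed.

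Two small points in your write-up want tightening, though neither is a genuine gap. First, the order of argument is mildly circular: you assert $\varpi(\Evec_2) \cong \cD(0) \otimes \sideset{}{'}{\bigotimes}_{v \ne \infty} \Pi_{\frac{1}{2},v}$ \emph{before} showing that~$\varpi(\Evec_2)$ meets the kernel~$\varpi(\frake_{1,0})$ of the quotient map trivially, yet the claimed isomorphism requires exactly that injectivity. The repair is to note first that~$\varpi(\Evec_2)$ is $\cD(0)$-isotypic at the infinite place---every translate $g_\rmf\, \tdEvec_2$ with $g_\rmf \in G(\bbAf)$ is again holomorphic of $K_\infty$-weight one, so all $(\frakg,K_\infty)$-constituents are lowest-weight-one modules---whence the intersection with the $\sym^1$-isotypic~$\varpi(\frake_{1,0})$ vanishes by the Harish-Chandra parameter separation ($0$ versus~$2$) that the paper also invokes; only then does the embedding into the cokernel deliver the isomorphism. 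Second, you should say explicitly why the image of~$\tdEvec_2$ in the cokernel is nonzero; this follows from the trivial intersection just established, or more directly from the fact that~$\Evec_2$ is visibly not a constant-coefficient combination of~$\frake_{1,0}$ and~$\frake_{0,1}$.
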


\begin{corollary}%
\label{cor:E2vec_automophic_representation}
We have the restricted tensor product decomposition
\begin{gather*}
  \varpi(\Evec_2)
\;\cong\;
  \cD(0)
  \,\otimes\,
  \sideset{}{'}{\bigotimes}_{v \ne \infty}\,
  \Pi_{\frac{1}{2},v}
\tx{.}
\end{gather*}
\end{corollary}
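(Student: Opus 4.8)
The plan is to feed the short exact sequence~\eqref{eq:automorphic_form_E2_short_exact_sequence} for~$\varpi(E_2)$ through the exact tensor product functor~$\varpi(\frake_{1,0}) \otimes \,\cdot\,$ of Theorem~\ref{thm:automorphic_tensor_products}, to decompose the resulting quotient at the infinite place, and then to single out the holomorphic summand by means of the explicit form~$\Evec_2$. Since~$\varpi(\frake_{1,0}) \cong \sym^1 \otimes \sideset{}{'}{\bigotimes}_{v \ne \infty} \bbone_v$ is finite dimensional and trivial at every finite place by Theorem~\ref{thm:finite_vector_valued_automorphic_representation}, tensoring is exact and alters only the archimedean factor. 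Applying it to~\eqref{eq:automorphic_form_E2_short_exact_sequence}, and noting~$\varpi(\frake_{1,0}) \otimes \bbone \cong \varpi(\frake_{1,0})$ because~$\bbone$ is trivial at all places, produces
\begin{gather*}
  \varpi(\frake_{1,0})
\lhra
  \varpi(\frake_{1,0}) \otimes \varpi(E_2)
\lthra
  \big( \sym^1 \otimes \cD(1) \big) \otimes
  \sideset{}{'}{\bigotimes}_{v \ne \infty} \Pi_{\frac{1}{2},v}
\tx{.}
\end{gather*}

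The archimedean computation is the translation-functor content of Bernstein--Gelfand: I claim~$\sym^1 \otimes \cD(1) \cong \cD(0) \oplus \cD(2)$. Comparing~$K_\infty$-types, $\sym^1$ carries the weights~$\pm 1$ while~$\cD(1)$ carries~$2, 4, 6, \dots$, so the product has weight~$1$ once and each of~$3, 5, 7, \dots$ twice, which is exactly the weight multiset of~$\cD(0) \oplus \cD(2)$. As all weights are positive, no finite-dimensional or principal-series factor can occur, so~$\cD(0)$ and~$\cD(2)$ are the only composition factors, each once. They have distinct infinitesimal characters (Harish-Chandra parameters~$0$ and~$2$), hence lie in different blocks, so~$\Ext$ between them vanishes and the tensor product is their direct sum. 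The quotient above therefore becomes~$B \oplus C$ with~$B = \cD(0) \otimes \sideset{}{'}{\bigotimes}_{v \ne \infty} \Pi_{\frac{1}{2},v}$ and~$C = \cD(2) \otimes \sideset{}{'}{\bigotimes}_{v \ne \infty} \Pi_{\frac{1}{2},v}$.

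Next I would exhibit the two subrepresentations. The finite-dimensional~$\varpi(\frake_{1,0})$ sits inside~$\varpi(\frake_{1,0}) \cdot \varpi(E_2)$ as~$\td\frake_{j,1-j} \cdot 1$, using that the constants belong to~$\varpi(E_2)$. For the second summand, formula~\eqref{eq:E2vec_LR_basis} reads~$\td\Evec_2 = \td\frake_{1,0} \cdot \td{E}_2 - \tfrac{3}{\pi} \td\frake_{0,1}$, and~$\td\frake_{0,1} \in \varpi(\frake_{0,1}) = \varpi(\frake_{1,0})$ by Theorem~\ref{thm:finite_vector_valued_automorphic_representation}; hence~$\td\Evec_2 \in \varpi(\frake_{1,0}) \cdot \varpi(E_2)$ and~$\varpi(\Evec_2)$ is a subrepresentation. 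Since~$\Evec_2$ is holomorphic of weight~$1$ and spherical at all finite places, its image in the quotient~$B \oplus C$ is a pure tensor whose archimedean component is the lowest weight vector of~$\cD(0)$ and whose finite components are spherical; because the spherical vector of~$\Pi_{\frac{1}{2},v}$ avoids the unique maximal subrepresentation~$\mathrm{St}_v$, it generates all of~$\Pi_{\frac{1}{2},v}$, so this image is precisely the summand~$B$. As~$\Pi_{\frac{1}{2},v}$ has no trivial subrepresentation we get~$\varpi(\Evec_2) \cap \varpi(\frake_{1,0}) = 0$, whence~$\varpi(\Evec_2) \cong B$, which is Corollary~\ref{cor:E2vec_automophic_representation}, and~$\varpi(\frake_{1,0}) \oplus \varpi(\Evec_2)$ is an internal direct sum. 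Quotienting the displayed sequence by this image~$B$ of~$\varpi(\Evec_2)$ leaves~$C$, yielding the asserted short exact sequence with~$\varpi \cong C$.

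The main obstacle is the step that makes the construction work, namely that the~$\cD(0)$-part splits off as a genuine subrepresentation of the space of vector-valued automorphic forms. This hinges entirely on~$\Evec_2$ being honestly holomorphic \emph{and} modular: it is the existence of this form, rather than any abstract splitting principle, that realizes~$B$ as a submodule and pins down its type. By contrast the companion~$\cD(2)$-part is not expected to split, in the spirit of Proposition~\ref{prop:E2_automorphic_embedding}, since a lift would require a holomorphic modular weight-$3$ vector-valued form carrying the Hecke data of~$E_2$, which fails for the same reason that~$\Ehol_2$ is not modular; this is the Bringmann--Kudla extension, but it is not needed for the stated exact sequence. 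The remaining verifications---exactness, the weight bookkeeping at infinity, and the local generation statements---are routine given the results already established.
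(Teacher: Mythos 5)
Your overall route coincides with the paper's: tensor the Roy--Schmidt--Yi sequence~\eqref{eq:automorphic_form_E2_short_exact_sequence} with the exact functor~$\varpi(\frake_{1,0}) \otimes \,\cdot\,$ from Theorem~\ref{thm:automorphic_tensor_products}, split the archimedean factor~$\sym^1 \otimes \cD(1) \cong \cD(0) \oplus \cD(2)$ (your $K_\infty$-weight count here is correct and usefully more explicit than the paper, which just invokes Theorem~\ref{thm:finite_vector_valued_automorphic_representation}), exhibit~$\tdEvec_2 \in \varpi(\frake_{1,0}) \cdot \varpi(E_2)$ via~\eqref{eq:E2vec_LR_basis}, and identify~$\varpi(\Evec_2)$ with the~$\cD(0)$-summand by tracking the generator; the paper does this last identification by lowest-$K_\infty$-type bookkeeping instead, but the idea is the same.

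There is, however, a genuine gap in your final step. The assertion ``as~$\Pi_{\frac{1}{2},v}$ has no trivial subrepresentation we get~$\varpi(\Evec_2) \cap \varpi(\frake_{1,0}) = 0$'' is a non sequitur, and the same gap affects your unproven claim that the image of~$\tdEvec_2$ in~$B \oplus C$ is nonzero. The intersection~$\varpi(\Evec_2) \cap \varpi(\frake_{1,0})$ is exactly the kernel of the map~$\varpi(\Evec_2) \ra B \oplus C$, so it dies in~$B$ by definition; no property of the subrepresentations of~$\Pi_{\frac{1}{2},v}$, or of~$B$, can constrain it. Nor does a contradiction arise abstractly: an extension of~$B$ by a trivial~$G(\bbAf)$-representation can perfectly well contain a trivial subrepresentation (the split one does), so knowing only that~$\varpi(\Evec_2)$ surjects onto~$B$ with $G(\bbAf)$-trivial kernel rules out nothing. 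What actually forces both facts is the holomorphy of~$\Evec_2$, which your closing paragraph invokes philosophically but which your argument never uses. Concretely: $\rmL_1 \tdEvec_2 = 0$, and since the~$G(\bbAf)$-action commutes with the~$(\frakg,K_\infty)$-action, every translate~$g_\rmf \tdEvec_2$ is again of weight~$1$ and annihilated by the lowering operator; hence~$\varpi(\Evec_2)$ is a sum of lowest-weight modules of lowest weight~$1$, i.e.\ it is~$\cD(0)$-isotypic as a~$(\frakg,K_\infty)$-module. Since~$\varpi(\frake_{1,0})$ is~$\sym^1$-isotypic and~$\cD(0) \not\cong \sym^1$ (Harish-Chandra parameters~$0$ and~$2$), the intersection vanishes; the same comparison shows~$\tdEvec_2 \notin \varpi(\frake_{1,0})$ (no nonzero weight-$1$ vector of~$\sym^1$ is killed by lowering), so the image of~$\tdEvec_2$ in the quotient is nonzero and your generation argument then correctly yields~$B$. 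This is precisely the isotypy argument the paper runs when it ``inspects the summands as Harish-Chandra modules that are isotypical for~$\sym^1$ and~$\cD(0)$''; with it inserted, your proof is complete.
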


\begin{proof}[Proof of Theorem~\ref{thm:automorphic_tensor_products_E2} and Corollary~\ref{cor:E2vec_automophic_representation}]
Observe that the constant function~$\rmL_2\,E_2$ lifts to a constant function in~$\varpi(E_2)$. That is, we have~$\varpi(1) \subset \varpi(E_2)$, and thus find that
\begin{gather*}
  \varpi(\frake_{1,0})
\;=\;
  \varpi(\frake_{1,0}) \cdot \varpi(1)
\;\subset\;
  \varpi(\frake_{1,0}) \cdot \varpi(E_2)
\tx{.}
\end{gather*}
In a similar vein, we conclude from~\eqref{eq:E2vec_LR_basis} that
\begin{gather}
\label{eq:E2vec_LR_basis_automorphic}
  \tdEvec_2
=
  \td\frake_{1,0} \td{E}_2 - \td\frake_{0,1} \mfrac{3}{\pi}
\;\in\;
  \varpi(\frake_{1,0}) \cdot \varpi(E_2)
\tx{.}
\end{gather}
To see that the sum representation~$\varpi(\frake_{1,0}) + \varpi(\Evec_2)$ is a direct sum, we inspect the summands as Harish-Chandra modules that are isotypical for~$\sym^1$ and~$\cD(0)$. Since these have different Harish-Chandra parameters, we have
\begin{gather*}
  \Ext_{(\frakg,K_\infty)}\big(\sym^1, \cD(0)\big)
=
  \Ext_{(\frakg,K_\infty)}\big(\cD(0), \sym^1\big)
\;=\;
  0
\tx{.}
\end{gather*}
This shows the first part of Theorem~\ref{thm:automorphic_tensor_products_E2}:
\begin{gather*}
  \varpi(\frake_{1,0}) + \varpi(\Evec_2)
\;=\;
  \varpi(\frake_{1,0}) \oplus \varpi(\Evec_2)
\;\subset\;
  \varpi(\frake_{1,0}) \cdot \varpi(E_2)
\tx{.}
\end{gather*}

From the first part of Theorem~\ref{thm:automorphic_tensor_products} and the short exact sequence~\eqref{eq:automorphic_form_E2_short_exact_sequence} for~$\varpi(E_2)$, we obtain the exact sequence
\begin{gather*}
  \varpi(\frake_{1,0})
\lhra
  \varpi(\frake_{1,0}) \cdot \varpi(E_2)
\lthra
  \varpi(\frake_{1,0}) \,\otimes\,
  \Big(
  \cD(1)
  \,\otimes\,
  \sideset{}{'}{\bigotimes}_{v \ne \infty}\,
  \Pi_{\frac{1}{2},v}
  \Big)
\tx{.}
\end{gather*}
Theorem~\ref{thm:finite_vector_valued_automorphic_representation} allows us to determine the right module. It is isomorphic to
\begin{gather*}
  \big( \cD(0) \oplus \cD(2) \big)
  \,\otimes\,
  \sideset{}{'}{\bigotimes}_{v \ne \infty}\,
  \Pi_{\frac{1}{2},v}
\tx{.}
\end{gather*}
We thus finish the proof when we establish Corollary~\ref{cor:E2vec_automophic_representation}.

To this end we inspect the~$K_\infty \times G(\bbA_f)$-module given by the lowest~$K_\infty$-type. We write~$\pi_k$ for the irreducible~$K_\infty$ representation corresponding to modular weight~$k$, and denote corresponding~$K_\infty$-isotypical components by square brackets~$\varpi[k]$. Further, we write~$\varpi_\rmf(E_2)$ and~$\varpi_\rmf(1)$ for the~$K_\infty \times G(\bbA_\rmf)$-modules generated by~$\wtd{E}_2$ and the constants. As a consequence of~\eqref{eq:automorphic_form_E2_short_exact_sequence}, we find that
\begin{gather*}
  \varpi(E_2)[0]
=
  \varpi(1)[0]
\;\cong\;
  \bbone
\quad\tx{and}\quad
  \varpi(E_2)[2]
\;\cong\;
  \pi_2
  \,\otimes\,
  \sideset{}{'}{\bigotimes}_{v \ne \infty}\,
  \Pi_{\frac{1}{2},v}
\tx{.}
\end{gather*}
Theorem~\ref{thm:finite_vector_valued_automorphic_representation} then shows that
\begin{gather*}
  \big( \varpi(\frake_{1,0}) \cdot \varpi(E_2) \big)[1]
\;=\;
  \wtd\frake_{0,1} \cdot \varpi(E_2)[0]
  \,\oplus\,
  \wtd\frake_{1,0} \cdot \varpi(E_2)[2]
\;\cong\;
  \pi_1
  \,\otimes\,
  \Big(
  \bbone_\rmf
  \,\oplus\,
  \sideset{}{'}{\bigotimes}_{v \ne \infty}\,
  \Pi_{\frac{1}{2},v}
  \Big)
\tx{,}
\end{gather*}
where~$\bbone_\rmf$ is the trivial~$G(\bbAf)$-representation.

From~\eqref{eq:E2vec_LR_basis_automorphic} and Theorem~\ref{thm:finite_vector_valued_automorphic_representation}, we conclude that
\begin{gather*}
  \varpi_\rmf(\Evec_2) + \varpi_\rmf(\frake_{0,1})
\;=\;
  \wtd\frake_{0,1} \cdot \varpi_\rmf(1)
  \,\oplus\,
  \wtd\frake_{1,0} \cdot \varpi_\rmf(E_2)
\tx{,}	
\end{gather*}
which we have identified in the previous equation. Taking the quotient by~$\wtd\frake_{0,1} \varpi_\rmf(1) \cong \pi_1 \otimes \bbone_\rmf$, we obtain Corollary~\ref{cor:E2vec_automophic_representation}.
\end{proof}

The sequence in Theorem~\ref{thm:automorphic_tensor_products_E2} does not split. In our final corollary, we identify the complement to~$\varpi(\Evec_2)$. It matches one of the Examples of case III(b) in Bringmann--Kudla's classification~\cite{bringmann-kudla-2018}.
\begin{corollary}%
\label{cor:BK_automophic_representation}
For
\begin{gather}
\label{eq:automorphic_tensor_products_E2:E2_indecomposable_analogue}
  f
=
  \frake_{1,0} E_2 + \mfrac{1}{2} \frake_{0,1} \rmR_2 E_2	
\end{gather}
we have an exact sequence
\begin{gather*}
  \varpi(\frake_{1,0})
\lhra
  \varpi(f)
\lthra
  \cD(2)
  \,\otimes\,
  \sideset{}{'}{\bigotimes}_{v \ne \infty}\,
  \Pi_{\frac{1}{2},v}
\tx{.}
\end{gather*}
\end{corollary}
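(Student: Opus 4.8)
The plan is to identify $\varpi(f)$ with the complement of $\varpi(\Evec_2)$ inside $M := \varpi(\frake_{1,0}) \cdot \varpi(E_2)$ and to read the exact sequence off the block structure of $M$ established in Theorem~\ref{thm:automorphic_tensor_products_E2}. First I would record that $\td f$ lies in $M$: by~\eqref{eq:automorphic_tensor_products_E2:E2_indecomposable_analogue} it is assembled from $\td\frake_{1,0} \cdot \td E_2$ and $\td\frake_{0,1} \cdot \widetilde{\rmR_2 E_2}$, and since $\widetilde{\rmR_2 E_2} = \rmR_2\, \td E_2 \in \varpi(E_2)$, Theorem~\ref{thm:automorphic_tensor_products} places $\td f$ in $M \subseteq \cA(G(\QQ) \backslash G(\bbA), \sym^1)$.

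The structural input is the decomposition of $M$ by infinitesimal character. Its composition factors are $\varpi(\frake_{1,0}) \cong \sym^1$, $\varpi(\Evec_2) \cong \cD(0) \otimes \sideset{}{'}{\bigotimes}_{v \ne \infty} \Pi_{\frac{1}{2},v}$, and $\varpi \cong \cD(2) \otimes \sideset{}{'}{\bigotimes}_{v \ne \infty} \Pi_{\frac{1}{2},v}$. As in the proof of Theorem~\ref{thm:automorphic_tensor_products_E2}, $\cD(0)$ has a different Harish-Chandra parameter from both $\sym^1$ and $\cD(2)$, while the latter two share one (this is exactly why $\sym^1\otimes\cD(1)\cong\cD(0)\oplus\cD(2)$ separates $\cD(0)$ off); equivalently, under $\frakz$ the factor $\cD(0)$ is isolated from the block containing $\sym^1$ and $\cD(2)$. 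Hence $\Ext^1(\varpi, \varpi(\Evec_2)) = \Ext^1(\varpi(\frake_{1,0}), \varpi(\Evec_2)) = 0$, and taking $N \subseteq M$ to be the generalized $\frakz$-eigenspace of the block of $\sym^1$ and $\cD(2)$ gives a submodule with $M = \varpi(\Evec_2) \oplus N$ and $\varpi(\frake_{1,0}) \hookrightarrow N \twoheadrightarrow \varpi$.

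The crux is to show $\td f \in N$, that is, that the $\varpi(\Evec_2)$-component of $f$ vanishes. Since $\cD(0)$ is a lowest-weight module of lowest $K_\infty$-weight $1$, the lowering operator embeds its weight-$3$ space into its weight-$1$ line $\CC\,\tdEvec_2$, whereas the weight-$1$ space of $N$ is the line $\CC\,\td\frake_{0,1}$ inside $\sym^1$; thus the $\cD(0)$-component of the weight-$3$ vector $f$ is detected by the $\tdEvec_2$-part of $\rmL f$. Using~\eqref{eq:symd_LR_basis_maass_operators} together with $\rmL_2 E_2 = \tfrac{3}{\pi}$ and $\rmL_4 \rmR_2 E_2 = -2 E_2$ (the latter from $\Delta_2 E_2 = 0$ and the commutation relation $\rmL_{k+2} \rmR_k = -\Delta_k - k$), a direct computation gives $\rmL f = \tfrac{3}{\pi}\, \frake_{0,1}$. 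This is a pure $\sym^1$-vector carrying no $\tdEvec_2$-part, so $f \in N$ and therefore $\varpi(f) \subseteq N$.

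It remains to see that $f$ generates $N$. The same identity $\rmL f = \tfrac{3}{\pi}\frake_{0,1}$ shows $\td\frake_{0,1} \in \varpi(f)$, whence $\varpi(\frake_{1,0}) \subseteq \varpi(f)$; and the image of $f$ in $\varpi = N / \varpi(\frake_{1,0})$ is a nonzero lowest-weight vector of $\cD(2)$ that is spherical at every finite place, so it generates $\cD(2) \otimes \sideset{}{'}{\bigotimes}_{v \ne \infty} \Pi_{\frac{1}{2},v}$ under $(\frakg, K_\infty) \times G(\bbAf)$, using that the spherical vector generates each indecomposable $\Pi_{\frac{1}{2},v}$. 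Thus $\varpi(f) = N$, which is precisely the asserted sequence; its non-splitting, inherited from that of the sequence in Theorem~\ref{thm:automorphic_tensor_products_E2} via $M = \varpi(\Evec_2)\oplus N$, is what matches Bringmann--Kudla's case~III(b). I expect the third step to be the main obstacle: the coefficient $\tfrac{1}{2}$ in~\eqref{eq:automorphic_tensor_products_E2:E2_indecomposable_analogue} is pinned down exactly so that the $\tdEvec_2$-contribution to $\rmL f$ cancels, and verifying this cancellation is where the Maa\ss{}-operator bookkeeping—especially $\rmL_4 \rmR_2 E_2 = -2E_2$—must be carried out with care.
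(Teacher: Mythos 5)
Your argument is correct, and it rests on the same two pillars as the paper's proof: the structure of $M := \varpi(\frake_{1,0}) \cdot \varpi(E_2)$ supplied by Theorem~\ref{thm:automorphic_tensor_products_E2}, and the lowering-operator identity showing that $\rmL\, \td{f}$ is a constant multiple of a single $\td\frake_{j,1-j}$ --- the latter is exactly the content of the Remark the paper places before the corollary. The organization, however, is genuinely different. The paper reduces the corollary to the equality $\varpi(f) + \varpi(\Evec_2) = M$ and proves it by generator bookkeeping: the Remark puts both $\td\frake_{j,1-j}$ into $\varpi(f)$, the expression \eqref{eq:E2vec_LR_basis} then puts $\td\frake_{1,0}\td{E}_2$ into the sum, and applying the raising operator and subtracting $\td{f}$ recovers the remaining generators of $M$. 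You instead split $M = \varpi(\Evec_2) \oplus N$ along generalized infinitesimal characters and prove $\varpi(f) = N$ outright. Your route buys a point of rigor: the reduction ``it suffices to show $\varpi(f)+\varpi(\Evec_2)=M$'' is only legitimate once one also knows $\varpi(f) \cap \varpi(\Evec_2) = 0$. Since $\varpi(\Evec_2)$ is irreducible, the alternative is $\varpi(f) = M$, which is \emph{not} excluded by the sum condition alone (a weight-$3$ spherical vector with nonvanishing $\cD(0)$-component does generate all of $M$, by the same block decomposition) and would contradict the asserted exact sequence. Your observation that $\rmL$ is injective on the weight-$3$ vectors of $\varpi(\Evec_2)$, so that $\rmL\td{f} \in \varpi(\frake_{1,0})$ forces the $\cD(0)$-component of $\td{f}$ to vanish, closes precisely this step, which the paper leaves implicit in its Remark. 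Your generation step is also sound: the image of $\td{f}$ in $N/\varpi(\frake_{1,0})$ is nonzero simply because $\varpi(\frake_{1,0}) \cong \sym^1$ has no weight-$3$ vectors, and the spherical vector generates each $\Pi_{\frac{1}{2},v}$ because it does not lie in the unique maximal submodule $\mathrm{St}_v$.

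One caveat, which you share with the printed statement: under the paper's own conventions \eqref{eq:def:symd_LR_basis} and \eqref{eq:symd_LR_basis_maass_operators}, where $\frake_{1,0}$ has weight $-1$, $\frake_{0,1}$ has weight $1$, $\rmL\frake_{1,0} = 0$ and $\rmL\frake_{0,1} = \frake_{1,0}$, the displayed $f = \frake_{1,0}E_2 + \frac{1}{2}\frake_{0,1}\rmR_2 E_2$ mixes weights $1$ and $5$, and the identity $\rmL f = \frac{3}{\pi}\frake_{0,1}$ fails for it literally. The weight-consistent reading, for which both your computation and the paper's Remark are correct, is $f = \frake_{0,1}E_2 + \frac{1}{2}\frake_{1,0}\rmR_2 E_2$, of weight $3$, with $\rmL_3 f = \frac{3}{\pi}\frake_{0,1}$; the corollary and Remark appear to carry an index swap. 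Your sign $\rmL_4\rmR_2 E_2 = -2E_2$ is the right one --- the Remark's ``$\rmL\rmR\,E_2 = 2E_2$'' has a sign slip --- as the cancellation you carried out confirms.
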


\begin{remark}
As a special case of the calculation by Bringmann--Kudla in~(6.10) of~\cite{bringmann-kudla-2018}, one can use the relation~$\rmL \rmR\, E_2 = 2 E_2$ to directly verify that
\begin{gather*}
  \rmL_3 \big(
  \frake_{1,0} E_2 + \mfrac{1}{2} \frake_{0,1} \rmR_2 E_2	
  \big)
=
  \frake_{0,1} E_2 +
  \frake_{1,0} \mfrac{3}{\pi}
+
  \mfrac{1}{2} \frake_{0,1} \rmL_4 \rmR_2\, E_2	
=
  \frake_{1,0} \mfrac{3}{\pi}
\tx{.}
\end{gather*}
\end{remark}

\begin{proof}
It suffices to show that~$\varpi(f) + \varpi(\Evec_2) = \varpi(\frake_{1,0}) \cdot \varpi(E_2)$. Write~$\varpi$ for the left hand side. From the remark, we have~$\wtd\frake_{j,1-j} \in \varpi$ for~$j \in \{0,1\}$. Then the expression for~$\Evec_2$ in~\eqref{eq:E2vec_LR_basis} shows that~$\wtd\frake_{1,0} \wtd{E}_2 \in \varpi$. The action of~$\frakg$ implies that
\begin{gather*}
  \wtd\frake_{0,1} \wtd{E}_2 + \wtd\frake_{1,0} \wtd{\rmR\,E}_2 \in \varpi
\tx{,}
\end{gather*}
which together with the expression for~$f$ finishes the proof.
\end{proof}


\ifbool{nobiblatex}{%
  \bibliographystyle{alpha}%
  \bibliography{bibliography.bib}%
}{%
  \Needspace*{4em}
  \printbibliography[heading=none]
}


\Needspace*{3\baselineskip}
\noindent
\rule{\textwidth}{0.15em}

{\noindent\small
Chalmers tekniska högskola och G\"oteborgs Universitet,
Institutionen f\"or Matematiska vetenskaper,
SE-412 96 G\"oteborg, Sweden\\
E-mail: \url{martin@raum-brothers.eu}\\
Homepage: \url{http://martin.raum-brothers.eu}
}


\ifdraft{%
\listoftodos%
}

\end{document}
